\documentclass[12pt,reqno]{amsart}
\usepackage{amscd}
\usepackage{graphicx}
\usepackage{mathrsfs}
\usepackage{cite}
\usepackage{ifpdf}
\usepackage{fancyhdr}
\usepackage{multirow}
\usepackage{array}
\usepackage{appendix}
\usepackage{longtable}
\usepackage{bm}
\usepackage{cancel}
\usepackage{makecell}
\usepackage{caption}

\usepackage{amssymb}
\usepackage{amsthm}
\usepackage{amsmath}
\usepackage{cancel}
\usepackage{tikz}
\usepackage{graphicx}
\usepackage{xcolor}
\usepackage{dsfont}
\usepackage{csquotes}
\usepackage{float}
\usepackage{soul}
\usepackage{extarrows}
\usepackage[shortlabels]{enumitem}
\usepackage[foot]{amsaddr}
\usepackage{doi}
\usepackage{bropd}
\usepackage{setspace}
\usepackage{ragged2e}
\usepackage{bookmark}
\usepackage{bm}
\usepackage{amsaddr}
\usepackage{subfig}
\usepackage{overpic}
\usepackage[a4paper,scale=0.76,twoside=false]{geometry}
\usepackage[skip=0pt plus1pt, indent=22pt]{parskip}
\allowdisplaybreaks
\usepackage{color,verbatim}
\usepackage[intlimits]{esint}
\usepackage[numbers,square,sort&compress]{natbib}
\usepackage{csquotes}
\usepackage{hyperref}
\usepackage[capitalise, nameinlink, noabbrev]{cleveref}
\hypersetup{colorlinks=true,
	linkcolor=blue,
	citecolor=red,
	allcolors=blue
}
\numberwithin{equation}{section}
\newtheorem{thm}{Theorem}[section]
\newtheorem{lem}[thm]{Lemma}
\newtheorem*{thmA}{Theorem A}
\newtheorem*{thmB}{Theorem B}

\newtheorem{prop}[thm]{Proposition}
\newtheorem{cor}[thm]{Corollary}

\newtheorem{rem}[thm]{Remark}

\newtheorem{assume}[thm]{Assumption}
\newtheorem{define}{Definition}[section]


\newcommand{\hrefemail}[1]{\href{mailto:#1}{#1}}

\usepackage[cal=euler,scr=boondox]{mathalpha}
\linespread{1.2}
\title[Helically symmetric solution and its free boundary]{Helically symmetric solution of 3D Euler equations with vorticity and its free boundary}
\subjclass[2010]{Primary 35Q35, 76B15; Secondary 35R35, 35B44}
\keywords{Euler equations; Free boundary; Helical symmetry; Regularity.}
\author[Du]{$^{\dagger,\ddagger,1}$Lili Du}
\email{$^{1}$\hrefemail{ dulili@scu.edu.cn}}
\address{$^{\dagger}$School of Mathematical Sciences, Shenzhen University, 
Shenzhen, 518000, P.~R.~China}

\author[Ji]{$^{\ddagger,2}$Feng Ji}
\email{$^{2}$\hrefemail{jifeng\_math@126.com}, corresponding author}
\address{$^{\ddagger}$, Department of Mathematics, Sichuan University, Chengdu, 610000, P.~R.~China}

\begin{document}
	
\begin{abstract}
    This paper investigates an incompressible steady free boundary problem of Euler equations with helical symmetry in $3$ dimensions and with nontrivial vorticity. The velocity field of the fluid arises from the spiral of its velocity within a cross-section, whose global existence, uniqueness and well-posedness with fixed boundary were established by a series of brilliant works. A perplexing issue, untouched in the literature, concerns the free boundary problem with (partial) unknown domain boundary in this helically symmetric configuration. We address this gap through the analysis of the optimal regularity property of the scalar stream function as a minimizer in a semilinear minimal problem, establishing the $C^{0,1}$-regularity of the minimizer, and the $C^{1,\alpha}$-regularity of its free boundary. More specifically, the regularity results are obtained in arbitrary cross-sections through smooth helical transformation by virtue of variational method and the rule of "flatness implies $C^{1,\alpha}$".
\end{abstract}

\maketitle
\tableofcontents

\section{Introduction and main results}

In this article, we investigate the incompressible rotational fluid in a three-dimensional domain $\Omega\subset\mathbb{R}^3$, governed by the steady Euler equations,
\begin{equation} \label{Euler}
\begin{cases}
\nabla\cdot{\bm u}=0 \qquad\qquad\quad \text{in} \quad \Omega, \\
({\bm u}\cdot\nabla){\bm u} + \nabla p =0 \quad \text{in} \quad \Omega.
\end{cases}
\end{equation}
Here $\bm u=(u_1, u_2, u_3)$ denotes the velocity field of the fluid and $p$ denotes the scalar pressure. In addition, the following impermeability condition holds on the free boundary $\partial\Omega$, whose position is not known apriori, that
\begin{equation}
\bm u \cdot \bm n =0 \quad \text{on} \quad \partial\Omega
\end{equation}
with $\bm n$ the outer unit normal to $\partial\Omega$. Moreover, the vorticity
$$\bm w = \nabla\times\bm u$$
is not necessary a zero vector in $\Omega$.

Three dimensional fluids are hard to tackle even with free vorticity, and the simplest case of motion in three dimensions is the axisymmetric flows, which have attracted loads of mathematicians, in \cite{ACF82} and \cite{ACF83} for example. This investigation focuses the solutions of (\ref{Euler}) which are invariant under a helically symmetric transformation. Helical flows arise behind propellers and wind turbines, and helical vortices are important for helicopter rotor performance, see \cite{H82} and \cite{LD09}. Moreover, helical pipe flow has received considerable attention in engineering disciplines in \cite{G06},\cite{T90} and so on, which has been also studied as a computational model for a blood vessel with non-vanishing curvature and torsion in \cite{ZM98-1} and \cite{ZM98-2}. Figure \ref{F0} gives an illustration of rotor far-wake investigated in \cite{OS07}, and Figure \ref{F2} describes a helical pipe motivated by physiological applications.

\begin{figure}[!h]
	\includegraphics[width=100mm]{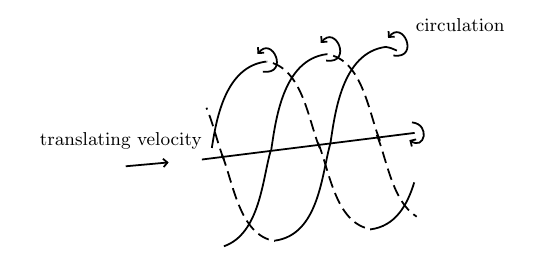}
	\caption{Sketch of the far-wake model.}
	\label{F0}
\end{figure}

\begin{figure}[!h]
	\includegraphics[width=80mm]{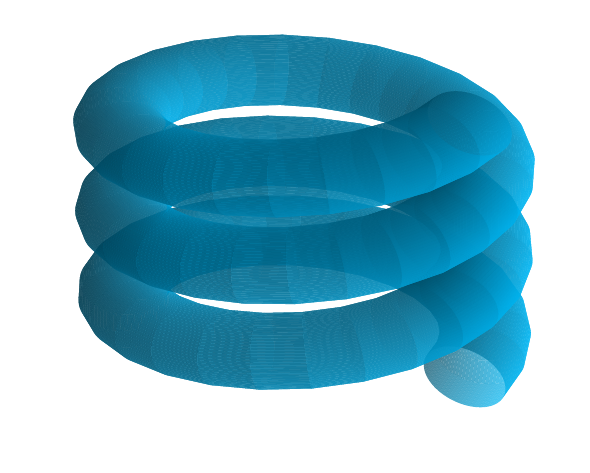}
	\caption{An example of helical pipe.}
	\label{F2}
\end{figure}

\subsection{Preliminary about helical symmetry}

Let $\bm x=(x,y,z)^T$. Fix a nonzero constant length scale $\kappa$ corresponding to the pitch of the helix. Define the helical symmetry group
$$G^\kappa:=\left\{ S_\rho^\kappa:\mathbb{R}^3\rightarrow\mathbb{R}^3 \ | \ \rho\in\mathbb{R} \right\},$$
where $S_\rho^\kappa$ denotes the screw transformation
\begin{equation*}
\begin{aligned}
S_\rho^\kappa \bm x &=S_\rho^\kappa(x,y,z)^T \\
&=(x\cos\rho+y\sin\rho, -x\sin\rho+y\cos\rho, z+\kappa\rho)^T \\
&=R_\rho (x,y,z)^T + (0,0,\kappa\rho)^T
\end{aligned}
\end{equation*}
with the rotation
$$R_\rho =
\begin{pmatrix}
\cos\rho & \sin\rho & \\
-\sin\rho & \cos\rho & \\
& & 1 \\
\end{pmatrix}.$$
In fact, $S_\rho^\kappa$ can be seen as the superposition of a simultaneous rotation around the $z$-axis with a translation along the $z$-axis, and $2\pi\kappa$ is the pitch of the helix. See Figure \ref{F1} for example.

\begin{figure}[!h]
	\includegraphics[width=80mm]{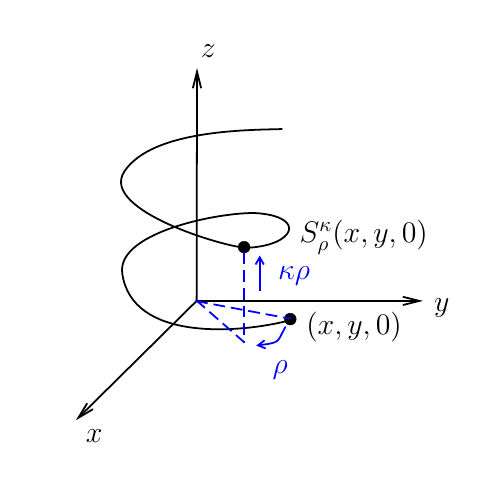}
	\caption{Helical transformation}
	\label{F1}
\end{figure}

We hope to solve the problem (\ref{Euler}) in a domain which is invariant under the screw transformation $S_\rho^\kappa$, namely, the \emph{helical domain}. We will also specify the definitions of \emph{helical function} and \emph{helical vector field}, since we expect to have helical solution $(\bm u, p)^T$ of (\ref{Euler}).

\begin{define} \label{hvf}
	(i) A domain $D\subset\mathbb{R}^3$ is called a ($\kappa$-)helical domain, provided that it satisfies
	$$S_\rho^\kappa D = D$$
	for any $\rho\in\mathbb{R}$ and some fixed $\kappa\neq0$.
	
	(ii)A function $f: \mathbb{R}^3\rightarrow\mathbb{R}$ is called a ($\kappa$-)helical function, provided that it satisfies
	$$f(S_\rho^\kappa\bm x) = f(\bm x)$$
	for any $\rho\in\mathbb{R}$ and some fixed $\kappa\neq0$.
	
	(iii)A vector field $\bm v=(v_1,v_2,v_3): \mathbb{R}^3\rightarrow\mathbb{R}^3$ is called a ($\kappa$-)helical vector field, provided that it satisfies
	$$\bm v(S_\rho^\kappa \bm x) = R_\rho \bm v (\bm x)$$
	for any $\rho\in\mathbb{R}$ and some fixed $\kappa\neq0$.
\end{define}

Observe that the group $G^\kappa$ has symmetry lines with tangent
$$\bm \xi_\kappa(\bm x) = (y, -x, \kappa)^T,$$
which plays an important role throughout the whole paper, since the helical functions are orthogonal to $\bm\xi_\kappa$ and the helical vector fields obey some rules corresponding to $\bm\xi_\kappa$ as pointed out in \cite{ET09}. In order not to disturb the flow of the paper we put the proof into Appendix \ref{appe1}.

\begin{lem} \label{lem1.1}
	Suppose $f$ is a differentiable function and $\bm v=(v_1,v_2,v_3)$ is a differentiable vector field in the helical domain $D\subset\mathbb{R}^3$. Then the function $f$ is helical if and only if
	$$\nabla f\cdot\bm\xi_\kappa=0,$$
	and the vector field $\bm v$ is helical if and only if
	\begin{equation*}
	\begin{cases}
	\nabla v_1 \cdot\bm\xi_\kappa = v_2, \\
	\nabla v_2 \cdot\bm\xi_\kappa = -v_1, \\
	\nabla v_3 \cdot\bm\xi_\kappa = 0.
	\end{cases}
	\end{equation*}
\end{lem}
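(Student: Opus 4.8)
The plan is to move between the global symmetry relations and the claimed infinitesimal identities by differentiating in the group parameter $\rho$, exploiting the fact that $\{S_\rho^\kappa\}_{\rho\in\mathbb{R}}$ is a flow whose generating velocity field is precisely $\bm\xi_\kappa$. First I would record two elementary structural facts. (1) The family is a one-parameter group, $S_{\rho+s}^\kappa=S_\rho^\kappa\circ S_s^\kappa$, which follows from $R_\rho R_s=R_{\rho+s}$ together with the fact that $R_\rho$ leaves the $z$-axis pointwise fixed. (2) Differentiating $S_\rho^\kappa\bm x=R_\rho\bm x+(0,0,\kappa\rho)^T$ in $\rho$ and using (1) gives $\tfrac{d}{d\rho}S_\rho^\kappa\bm x=\bm\xi_\kappa(S_\rho^\kappa\bm x)$; in particular $\tfrac{d}{d\rho}\big|_{\rho=0}S_\rho^\kappa\bm x=(y,-x,\kappa)^T=\bm\xi_\kappa(\bm x)$. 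I would also note $\tfrac{d}{d\rho}R_\rho=AR_\rho=R_\rho A$ with the infinitesimal generator $A=\left(\begin{smallmatrix}0&1&0\\-1&0&0\\0&0&0\end{smallmatrix}\right)$, so $\tfrac{d}{d\rho}\big|_{\rho=0}R_\rho=A$.

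For the \emph{necessity} in both assertions I would simply differentiate the defining identity at $\rho=0$ and apply the chain rule: from $f(S_\rho^\kappa\bm x)=f(\bm x)$ one gets $\nabla f(\bm x)\cdot\bm\xi_\kappa(\bm x)=0$, and from $\bm v(S_\rho^\kappa\bm x)=R_\rho\bm v(\bm x)$ one gets, componentwise, $\nabla v_i(\bm x)\cdot\bm\xi_\kappa(\bm x)=(A\bm v(\bm x))_i$, i.e. exactly the three stated equations. For the \emph{sufficiency} I would integrate these first-order conditions along orbits. Fix $\bm x\in D$; since $D$ is a helical domain, $S_\rho^\kappa\bm x\in D$ for every $\rho$. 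In the scalar case set $g(\rho):=f(S_\rho^\kappa\bm x)$; then $g'(\rho)=\nabla f(S_\rho^\kappa\bm x)\cdot\bm\xi_\kappa(S_\rho^\kappa\bm x)=0$ by the hypothesis applied at the point $S_\rho^\kappa\bm x$, so $g$ is constant and $f(S_\rho^\kappa\bm x)=f(\bm x)$. In the vector case set $\bm h(\rho):=R_{-\rho}\bm v(S_\rho^\kappa\bm x)$ (recall $R_\rho^{-1}=R_{-\rho}$); differentiating and using $\tfrac{d}{d\rho}R_{-\rho}=-R_{-\rho}A$ together with $\tfrac{d}{d\rho}\bm v(S_\rho^\kappa\bm x)=D\bm v(S_\rho^\kappa\bm x)\,\bm\xi_\kappa(S_\rho^\kappa\bm x)=A\bm v(S_\rho^\kappa\bm x)$ (the last equality being exactly the three hypotheses read at the point $S_\rho^\kappa\bm x$), the two terms cancel, so $\bm h'\equiv 0$ and hence $\bm v(S_\rho^\kappa\bm x)=R_\rho\bm v(\bm x)$.

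The step I expect to require the most care is the vector-field sufficiency: one has to identify the generator $A$ of $\{R_\rho\}$ correctly, verify the commutation $AR_\rho=R_\rho A$, and check that the three scalar conditions are genuinely equivalent to the single pointwise matrix identity $D\bm v(\bm y)\,\bm\xi_\kappa(\bm y)=A\bm v(\bm y)$ for all $\bm y\in D$, so that the cancellation in $\bm h'(\rho)$ goes through cleanly. Everything else — the group law, the identification of $\bm\xi_\kappa$ as the flow generator, and the scalar case — is routine, and the hypothesis that $D$ is a helical domain is used only to keep $S_\rho^\kappa\bm x$ inside the domain of definition throughout the integration. The full details are deferred to \appdref{appe1}.
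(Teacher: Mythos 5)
Your proposal is correct and follows essentially the same route as the paper: differentiate the defining identity at $\rho=0$ for necessity, and integrate the infinitesimal conditions along orbits (showing $g'(\rho)=0$, respectively $\bm h'(\rho)=0$ with $\bm h(\rho)=R_{-\rho}\bm v(S_\rho^\kappa\bm x)$) for sufficiency. The only difference is that the paper omits the vector-field converse as ``analogous,'' while you spell out the conjugation argument explicitly, including the commutation $AR_\rho=R_\rho A$ — a welcome but not divergent addition.
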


Summarily, the flows with helical symmetry fall within a class of "two-and-a-half" dimensional flows, namely flows in a three-dimensional domain with certain spatial symmetry, but opposed to the axisymmetric case since there is no orthogonal coordinates such that one of them runs along the symmetry lines.

\subsection{Physical setting of the free boundary problem}

Utilizing helical symmetry, the 3D Euler problem (\ref{Euler}) can be reduced to a 2D vorticity equation as follows. We first give an orthogonal condition of $\bm u$ to the symmetry lines $\bm\xi_\kappa$ of the group $G^\kappa$, such that the vorticity $\bm w$ is directed along the symmetry lines to avoid the vorticity stretching. In this sense the orthogonal condition is somewhat similar to the assumption that the azimuthal component of the velocity is zero in the axisymmetric setting.

\begin{assume} (Orthogonal condition) \label{assu}
	Throughout this article we will assume that the velocity field $\bm u$ of the Euler equations (\ref{Euler}) are orthogonal to the helices, namely,
	\begin{equation} \label{ortho}
	\bm u\cdot\bm \xi_\kappa = yu_1 - xu_2 + \kappa u_3 =0.
	\end{equation}
\end{assume}

\begin{rem}
	In this paper we consider the steady Euler flow. Nevertheless, suppose that the initial velocity $\bm u_0$ is the helical vector field that gives rise to the helical solution $\bm u$ of three-dimensional unsteady Euler equations, then $\bm u_0\cdot\bm\xi_\kappa=0$ implies that $\bm u\cdot\bm\xi_\kappa=0$ at any time $t$. See \cite{ET09}, Corollary 2.8.
\end{rem}

\subsubsection{Dimensionality reduction and the stream function}

Under Assumption \ref{assu}, by virtue of $\nabla u_3\cdot\bm\xi_\kappa=0$ we can rewrite the first equation in (\ref{Euler}) that
\begin{equation*}
\begin{aligned}
0 &= \frac{\partial u_1}{\partial x} + \frac{\partial u_2}{\partial y} + \frac{1}{\kappa}\left( -y\frac{\partial u_3}{\partial x} + x\frac{\partial u_3}{\partial y} \right) \\
&=\frac{\partial u_1}{\partial x} + \frac{\partial u_2}{\partial y} + \frac{1}{\kappa^2}\left[ -y\frac{\partial (-yu_1+xu_2)}{\partial x} + x\frac{\partial (-yu_1+xu_2)}{\partial y} \right] \\
&= \frac{1}{\kappa^2}\frac{\partial}{\partial x}\left[ (\kappa^2+y^2)u_1 - xyu_2 \right] + \frac{1}{\kappa^2}\frac{\partial}{\partial y}\left[ (\kappa^2+x^2)u_2 - xyu_1 \right].
\end{aligned}
\end{equation*}
Hence we can consider the problem in a two-dimensional domain
$$\Omega_{\rho_0}=\Omega\cap\{z=\rho_0\}.$$
Without loss of generality we focus on $D_0$, namely,
$$\Omega_0=\left\{ (x,y)^T \ | \ (x,y,0)\in \Omega \right\}.$$
The fluid domain $\Omega$ and the free boundary $\partial\Omega$ can be inherited from $\Omega_0$ and $\partial\Omega_0$, see Figure \ref{F3} for an illustration.
\begin{figure}[!h]
	\includegraphics[width=90mm]{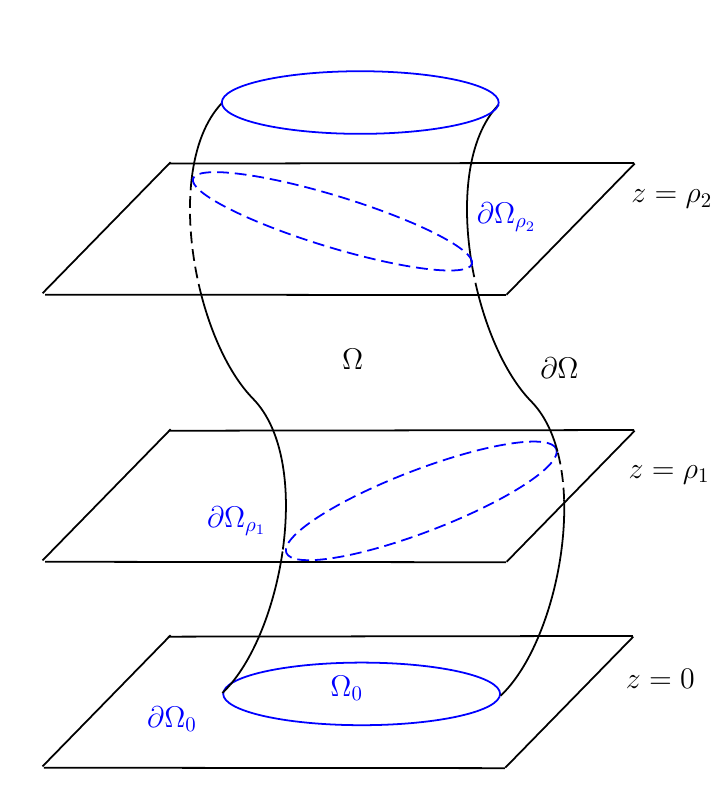}
	\caption{Dimensionality reduction of $\Omega$.}
	\label{F3}
\end{figure}

For notational simplicity denote
$$u_i^0(x,y)=u_i(x,y,0) \quad \text{for} \quad i=1,2,3.$$
Thus there exists a stream function $\psi(x,y):\mathbb{R}^2\rightarrow\mathbb{R}$ in $\Omega_0$ satisfying
$$\frac{\partial\psi}{\partial x}=\frac{1}{\kappa^2}\left[-(\kappa^2+x^2)u_2^0 + xyu_1^0\right], \quad \frac{\partial\psi}{\partial y}=\frac{1}{\kappa^2}\left[(\kappa^2+y^2)u_1^0 - xyu_2^0\right].$$
Denote
\begin{equation} \label{K}
K(x,y)= \frac{1}{\kappa^2+x^2+y^2}
\begin{pmatrix}
\kappa^2+y^2 & -xy \\
-xy & \kappa^2+x^2 \\
\end{pmatrix},
\end{equation}
we have that
\begin{equation} \label{re1}
\begin{pmatrix}
u_1^0 \\
u_2^0
\end{pmatrix}
=
\begin{pmatrix}
0 & 1 \\
-1& 0 \\
\end{pmatrix}
K(x,y)
\begin{pmatrix}
\frac{\partial\psi}{\partial x} \\
\frac{\partial\psi}{\partial y} \\
\end{pmatrix}
\ \text{and} \
\begin{pmatrix}
\frac{\partial\psi}{\partial x} \\
\frac{\partial\psi}{\partial y} \\
\end{pmatrix}
=
K^{-1}(x,y)
\begin{pmatrix}
0 & -1 \\
1 & 0 \\
\end{pmatrix}
\begin{pmatrix}
u_1^0 \\
u_2^0
\end{pmatrix},
\end{equation}
where
$$K^{-1}(x,y)=\frac{1}{\kappa^2}
\begin{pmatrix}
\kappa^2+x^2 & xy \\
xy & \kappa^2+y^2 \\
\end{pmatrix}
$$
is the inverse matrix of $K(x,y)$.

\subsubsection{Formulation of the free boundary}

In the free boundary problem, (a portion of) the domain boundary $\partial\Omega$ remains unknown, and is determined by the solution. We propose to parametrize the free boundary $\partial\Omega_0$ within the cross-section $\{z=0\}$ via a scalar potential function $\psi:\mathbb{R}^2\rightarrow\mathbb{R}$, establishing a functional representation of the interface geometry.

Suppose $\Omega$ is the helical domain with smooth enough boundary. Let
$$\bm n(\bm x)=(n_1(\bm x),n_2(\bm x),n_3(\bm x))^T\in\mathbb{R}^3$$
be the unit outer normal to $\partial\Omega$ at $\bm x=(x,y,z)^T\in\partial\Omega$, and
$$\bm\nu(X)=(\nu_1(X),\nu_2(X))^T\in\mathbb{R}^2$$
be the unit vector tangent to $\partial\Omega_0$ at $X=(x,y)\in\partial\Omega_0$. Since $\Omega$ is a helical domain, all the points of the form $S_\rho^\kappa((X,0))$ belongs to $\partial\Omega$ for any $\rho>0$ and every $X\in\partial\Omega_0$. This observation indicates that $\bm\xi_\kappa=(y,-x,\kappa)$ is tangent to $\partial\Omega$, that is,
\begin{equation} \label{eq2}
\bm\xi_\kappa\cdot\bm n=0.
\end{equation}
Hence we can denote
$$\bm n=\left( n_1,n_2,-\frac{y}{\kappa}n_1 + \frac{x}{\kappa}n_2 \right).$$
Combining this with the impermeability condition $\bm u\cdot\bm n=0$ on $\partial\Omega$, we obtain
\begin{equation*}
\begin{aligned}
0 &= u_1n_1 + u_2n_2 + \left( -\frac{y}{\kappa}u_1 + \frac{x}{\kappa}u_2 \right)\left( -\frac{y}{\kappa}n_1 + \frac{x}{\kappa}n_2 \right) \\
&=\frac{1}{\kappa^2}
\begin{pmatrix}
u_1 & u_2 \\
\end{pmatrix}
\begin{pmatrix}
\kappa^2+y^2 & -xy \\
-xy & \kappa^2+x^2 \\
\end{pmatrix}
\begin{pmatrix}
n_1 \\
n_2
\end{pmatrix}.
\end{aligned}
\end{equation*}
Thus, $\begin{pmatrix}
n_1 \\
n_2 \\
\end{pmatrix}$ is proportional to the vector $\begin{pmatrix}
-(\kappa^2+x^2)u_2 + xyu_1 \\
-xyu_2 + (\kappa^2+y^2)u_1 \\
\end{pmatrix}$.

On the other hand, since $(\nu_1,\nu_2,0)^T$ is tangent to $\partial\Omega$ at $(x,y,0)^T$, we have
$$(\nu_1,\nu_2,0)\cdot\bm n = \nu_1n_1+\nu_2n_2 = 0.$$
This implies that $\bm\nu$ is proportional to $\begin{pmatrix}
(\kappa^2+y^2)u_1^0 - xyu_2^0 \\
(\kappa^2+x^2)u_2^0 - xyu_1^0 \\
\end{pmatrix}$. Now compute
\begin{equation*}
\begin{aligned}
\nabla\psi\cdot\bm\nu &= \frac{\partial\psi}{\partial x}\nu_1 + \frac{\partial\psi}{\partial y}\nu_2 \\
&=C[xyu_1^0-(\kappa^2+x^2)u_2^0][(\kappa^2+y^2)u_1^0-xyu_2^0] \\
&\quad +C[(\kappa^2+y^2)u_1^0-xyu_2^0][(\kappa^2+x^2)u_2^0-xyu_1^0] \\
&=0
\end{aligned}
\end{equation*}
for some $C>0$. Hence $\psi$ is defined up to a constant on the free boundary $\partial\Omega_0$, and without loss of generality, we choose
$$\psi\equiv0 \quad \text{on} \quad \partial\Omega_0.$$

\subsubsection{Free boundary problem}

We now derive the two-dimensional governing equation in $\{\psi>0\}$ and the free boundary condition on $\partial\{\psi>0\}$. Direct calculation shows that $\bm w$ is proportional to $\bm\xi_\kappa$, namely,
$$\bm w = \nabla\times\bm u =\frac{1}{\kappa}\left( \frac{\partial u_2}{\partial x} - \frac{\partial u_1}{\partial y} \right)\bm\xi_\kappa.$$
Take the curl of both sides of the second equation in (\ref{Euler}), we obtain
\begin{equation} \label{curl}
(\bm u\cdot\nabla)\bm w + (\bm w\cdot\nabla)\bm u =0.
\end{equation}
Define
$$W(x,y):=\frac{\partial u_2^0}{\partial x} - \frac{\partial u_1^0}{\partial y}$$
on $\{z=0\}$, and then (\ref{curl}) implies that $(\bm u\cdot\nabla)W + \frac{1}{\kappa}W\nabla u_3\cdot\bm\xi_\kappa=0$, which together with the fact that $\nabla u_3\cdot\bm\xi_\kappa=0$ leads to
$$\bm u \cdot \nabla W =0.$$
Consequently $W$ is invariant along the streamline, and we can write
$$W = f(\psi):\mathbb{R}\rightarrow\mathbb{R}.$$
Recall the definition of $W(x,y)$ and the formulation (\ref{re1}), we obtain the uniformly elliptic equation
$$- div \left( K(x,y)\nabla\psi \right) = f(\psi) \quad \text{in} \quad \{\psi>0\}.$$

On the other hand, the \emph{Bernoulli's law} shows that
$$\frac12\bm u^2 + p = \mathcal{B}$$
on each streamline for some uniform constant $\mathcal{B}$. Taking $z=0$ and plugging (\ref{ortho})(\ref{re1}) into it, and noticing that
$$p=p_0 \quad \text{on} \quad \partial\Omega$$
with $p_0$ the atmospheric pressure, we obtain
\begin{equation*}
\left<K(x,y)\nabla\psi,\nabla\psi\right> = 2(\mathcal{B}-p_0) \quad \text{on} \quad \partial\{\psi>0\},
\end{equation*}
where $\left<K(x,y)\nabla\psi,\nabla\psi\right>$ denotes the inner product of $K(x,y)\nabla\psi$ and $\nabla\psi$. Hence $\psi$ is the solution to the two-dimensional semilinear free boundary problem
\begin{equation}\label{eq1}
\begin{cases}
div (K(x,y)\nabla\psi) = f(\psi) \qquad\qquad\quad \text{in} \quad \{\psi>0\}, \\
\left<K(x,y)\nabla\psi,\nabla\psi\right> = \lambda^2 \qquad\qquad\quad\ \text{on} \quad \partial\{\psi>0\}
\end{cases}
\end{equation}
with $K(x,y)$ defined as in (\ref{K}) and $\lambda^2=2(\mathcal{B}-p_0)>0$.

\subsubsection{Recovering to three-dimensional solution}

By virtue of (\ref{re1}), the solution $\psi$ to (\ref{eq1}) gives the explicit form of $u_1^0(x,y)=u_1(x,y,0)$ and $u_2^0(x,y)=u_2(x,y,0)$. Recall the orthogonal assumption (\ref{ortho}),
$$u_3(x,y,0)=\frac{1}{\kappa}(-yu_1(x,y,0)+xu_2(x,y,0)).$$
Hence we recover the $u_3$ component on $\Omega_0\times\{0\}$. The velocity field $\bm u$ in $\Omega$ follows from Definition \ref{hvf} that
$$\bm u(x,y,z) = R_{\frac{z}{\kappa}}\bm u (S_{-\frac{z}{\kappa}}^\kappa(x,y,z)).$$
Meanwhile, we can solve $p$ through the PDE
\begin{equation*}
\begin{cases}
\Delta p = div\left( (\bm u\cdot\nabla)\bm u \right) \quad \text{in} \quad \Omega, \\
p=p_0 \qquad\qquad\qquad\quad\, \text{on} \quad \partial\Omega
\end{cases}
\end{equation*}
with some constant atmosphere pressure $p_0$. Hence, $(\bm u,p)$ is the helical solution to the Euler equations (\ref{Euler}) which is deduced from the solution $\psi$ to the 2-dimensional free boundary problem (\ref{eq1}). It is remarkable that the regularity of the free boundary $\partial\Omega$ can be inherited from $\partial\Omega_0$ through a smooth transformation, and we focus only on its free boundary within the cross-section $\{z=0\}$.

More geometric properties of helical flows are referred to \cite{AB14} and \cite{ET09}.

\subsection{Variational approach and main theorems}

In this subsection, we will understand the helical solution of (\ref{eq1}) by way of the variational approach, and reformulate the problem as an energy minimization framework. The analysis of such energy minimizing problem, associated with the investigation of the free boundary, was launched by Alt and Caffarelli in 1981. The celebrated A-C functional
\begin{equation} \label{ac}
\mathcal{J}_{\rm AC}(\psi,D)=\int_D \left( |\nabla\psi|^2 + \lambda^2\chi_{\{\psi>0\}} \right)dX
\end{equation}
has been a focal point of scholarly exploration ever since its introduction in \cite{AC81} and given rise to a substantial body of literature and sparked research endeavors. They proved that all the free boundary points are $C^{1,\alpha}$ regular in two dimensions for the Laplacian operator and $f\equiv0$. After that, Caffarelli applied a general strategy by a series works of \cite{C87}, \cite{C88} and \cite{C89} to attack the regularity of free boundary based on a powerful monotonicity formula, Harnack principles and families of continuous perturbation to reveal the criterion that "flatness implies $C^{1,\alpha}$" in any dimensions. The revelation marked the beginning of a burgeoning and fertile area of the research of the free boundary points, and the theories are extended to more complex governing operators in \cite{ACF84}, \cite{DP05} and \cite{OY90}. Recently in 2011, De Silva has utilized a new methodology of partial boundary Harnack and linearization to obtain an improvement of flatness approach for viscosity solutions in \cite{S11}, which was applied to study more general free boundary problems in \cite{SFS14}, \cite{SFS15} and \cite{SFS16}.

In this paper we consider the functional
$$\mathcal{J}_h(\psi,D):=\int_D \left( \left<K(x,y)\nabla\psi,\nabla\psi\right> - 2F(\psi) +\lambda^2\chi_{\{\psi>0\}} \right)dX$$
with the constant $\lambda>0$ in the admissible set
\begin{equation} \label{A}
\mathcal{A}:=\left\{ \psi\in W^{1,2}(D) \ | \ \psi\geq0 \ \text{on} \ S \right\},
\end{equation}
where $D$ is a domain in $\mathbb{R}^2$, $F(s)=\int_0^s f(s)ds$ is the primitive for some given function $f(s)\in C^{1,\beta}(\mathbb{R})$, $\chi_{\{\psi>0\}}$ denotes the characterization function of the set $\{\psi>0\}$, $dX=dxdy$ and $S\subset\partial D$ is a portion of fixed boundary. Moreover, we give further assumptions about $f$ that
\begin{equation}\label{assu2}
0\leq f(s) \leq F_0 \quad \text{for} \quad s\leq0 \qquad \text{and} \qquad -F_0\leq f'(s) \leq 0 \quad \text{for} \quad s\in\mathbb{R}
\end{equation}
for some given constant $F_0>0$. This assumption indicates that $F''(s)\leq0$ for all $s\in\mathbb{R}$, namely, for any $p,q\in\mathbb{R}$,
$$F(q)-F(p)\geq F'(q)(q-p).$$
Observe that if $X_0\in\partial\{\psi>0\}\subset D$, then $f(u(X_0))=f(0)\geq0$ and this implies that the vorticity is non-negative on any free boundary points. Under this setting a standard variational approach, which is contained in Appendix \ref{appe2}, shows that if $\psi$ minimizes $\mathcal{J}_h(\psi,D)$ among all the functions in $\mathcal{A}$, then $\psi$ satisfies
\begin{equation*}
\begin{cases}
- div (K(x,y)\nabla\psi) = f(\psi) \qquad\qquad\quad \text{in} \quad \{\psi>0\}\cap D, \\
\left<K(x,y)\nabla\psi,\nabla\psi\right> = \lambda^2 \qquad\qquad\qquad\, \text{on} \quad \partial\{\psi>0\}\cap D.
\end{cases}
\end{equation*}

\begin{rem}
	We give an example of $f(s)\in C^{1,\beta}(\mathbb{R})$ satisfying condition (\ref{assu2}),
	$$f(s)=\frac{1}{1+e^{4s}}-\frac12.$$
	It is straightforward to check that $0\leq f(s)\leq\frac12$ when $s\leq0$, and
	$$f'(s)=-\frac{4e^{4s}}{(1+e^{4s})^2}\in[-1,0]$$
	for any $s\in\mathbb{R}$.
\end{rem}

This paper focuses on the regularity theory of the solution $\psi$ and the free boundary $\partial\{\psi>0\}$, and we give the following definition of \emph{local minimizer} in a ball $B_R(X_0)$ centered at the free boundary point $X_0\in\partial\{\psi>0\}$.

\begin{define}
	We say the helical function $\psi\in W^{1,2}(B_R(X_0))$ with $\psi\geq0$ is a local minimizer of $\mathcal{J}_h$ in $B_R(X_0)\subset D$, if and only if
	$$\mathcal{J}_h(\psi,B_R(X_0))\leq\mathcal{J}_h(\tilde{\psi},B_R(X_0))$$
	for all functions $\tilde{\psi}\in W^{1,2}(B_R(X_0))$ with $\psi=\tilde{\psi}$ on $\partial B_R(X_0)$.
\end{define}

We have two main results. The first one concerns the optimal regularity and the non-degeneracy of the local minimizer $\psi$.

\begin{thmA}
	Suppose $\psi$ is a local minimizer of $\mathcal{J}_h$ in $B_R(X_0)$, where $X_0\in\partial\{\psi>0\}$ is a free boundary point. Then the following properties hold:
	
	(i) Lipschitz continuity: $\psi\in C^{0,1}(B_R(X_0))$.
	
	(ii) Non-degeneracy: There is a constant $c_0=c_0(\lambda)>0$ such that
	\begin{equation*}
	\left( \fint_{\partial B_r(X_0)} \psi^2 dX \right)^{1/2} \geq c_0 r \quad \text{for} \quad 0<r<R/2.
	\end{equation*}
\end{thmA}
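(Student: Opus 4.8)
The plan is to follow the Alt--Caffarelli scheme adapted to the variable-coefficient, semilinear setting, exploiting that $K(x,y)$ defined in \eqref{K} is smooth, symmetric and uniformly elliptic (its eigenvalues lie between $\kappa^2/(\kappa^2+x^2+y^2)$ and $1$ on any bounded domain), and that $f\in C^{1,\beta}$ satisfies \eqref{assu2}, so in particular $f$ is bounded on the relevant range. First I would establish that a local minimizer $\psi$ is a nonnegative (weak) subsolution of $\operatorname{div}(K\nabla\psi)=f(\psi)$ across all of $B_R(X_0)$ — one tests the minimality against $\min(\psi,\psi+\varepsilon\varphi)$ for $\varphi\le 0$ — and hence, by De Giorgi--Nash--Moser for the operator $\operatorname{div}(K\nabla\cdot)$ together with the $L^\infty$ bound on $f(\psi)$ coming from \eqref{assu2} (and a Harnack/boundedness argument handling the $F(\psi)$ term), that $\psi\in C^{0,\gamma}_{\mathrm{loc}}$ and is locally bounded. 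This upgrades the problem to one with bounded right-hand side.

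Next comes the core Lipschitz estimate. For a free boundary point $x_0$ with $B_{2r}(x_0)\subset B_R(X_0)$, I would show
\[
\fint_{\partial B_r(x_0)}\psi\,d\sigma \le C r
\]
for a universal $C=C(\lambda,F_0,\kappa,n)$. The mechanism: let $h$ solve $\operatorname{div}(K\nabla h)=f(\psi)$ in $B_r(x_0)$ with $h=\psi$ on $\partial B_r(x_0)$; then $h\ge\psi$ (comparison, since $\psi$ is a subsolution) and $h>0$, so by interior Harnack for $h-$(a particular solution absorbing the bounded inhomogeneity) one has $h(x_0)\approx \fint_{\partial B_r}\psi$. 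Now compare the energy of $\psi$ with that of $\max(\psi-t(h-\psi)_?,\dots)$ — more precisely, insert the competitor equal to $h$ where $h$ is large and interpolating down to $\psi$ near $\partial B_r$, or directly use the standard competitor $v$ agreeing with $\psi$ outside $B_{r}$ and dominated by $h$ inside. Minimality $\mathcal{J}_h(\psi,B_r)\le\mathcal{J}_h(v,B_r)$, after the quadratic expansion of $\langle K\nabla\cdot,\nabla\cdot\rangle$ around $h$ and the bound $|2F(\psi)-2F(v)|\le C\|f\|_\infty\|\psi-v\|_{L^1}$, forces
\[
c\,\lambda^2\,\bigl|\{\psi=0\}\cap B_r\bigr| \;\le\; \int_{B_r}\langle K\nabla(h-\psi),\nabla(h-\psi)\rangle + \text{(lower order)},
\]
while the left side is bounded below by a multiple of $\bigl(\fint_{\partial B_r}\psi\bigr)^2 r^{n-2}$ when that average is small relative to $r$ (using that $\{\psi=0\}$ occupies a definite fraction of $B_r$, because $x_0$ is a free boundary point and $\psi$ is a subsolution attaining $0$). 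Rearranging gives the linear growth; combined with the subsolution property (so $\sup_{B_{r/2}}\psi\lesssim\fint_{\partial B_r}\psi$ via Moser), this yields $\psi\in C^{0,1}_{\mathrm{loc}}$, i.e. part (i). I expect this energy-comparison step — pinning down the correct competitor and tracking how the semilinear term $F(\psi)$ and the $x$-dependence of $K$ enter the quadratic expansion without destroying the sign of the leading term — to be the main obstacle; the saving grace is that both perturbations are genuinely lower order ($F$ because $f$ is bounded, $K$ because it is Lipschitz and one only needs a fixed-scale comparison).

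For the non-degeneracy (ii), I would argue by contradiction: if $\bigl(\fint_{\partial B_r(X_0)}\psi^2\bigr)^{1/2} < \varepsilon r$ for $\varepsilon$ small, build the competitor $v=\min(\psi,\ \varepsilon' r\, w)$ where $w$ is the $K$-harmonic (or $K$-capacitary) function in $B_r$ that is $0$ on $\partial B_r$ and comparable to the distance, chosen so that $v=\psi$ on $\partial B_r$ but $v\equiv 0$ on, say, $B_{r/4}(X_0)$; then
\[
\mathcal{J}_h(\psi,B_r) - \mathcal{J}_h(v,B_r) \;\ge\; \lambda^2\bigl|\{\psi>0\}\cap B_{r/4}\bigr| - C\varepsilon^2 r^n - C\|f\|_\infty r\,\|\psi-v\|_{L^1},
\]
and since $X_0\in\partial\{\psi>0\}$ gives $|\{\psi>0\}\cap B_{r/4}|\ge 0$ — here one instead uses that either $\psi\equiv 0$ in $B_{r/4}$ (impossible, as $X_0$ is a limit of positivity) or the measure of positivity is controlled below after a further covering/continuity argument, the classical trick being to iterate the smallness of the boundary average down dyadic scales and conclude $\psi\equiv 0$ near $X_0$, contradicting $X_0\in\partial\{\psi>0\}$. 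Thus the boundary $L^2$-average cannot be too small, giving (ii) with $c_0=c_0(\lambda)$ (the dependence on $\lambda$ only, after the $\kappa$- and $F_0$-dependent constants are absorbed, follows by the usual scaling normalization $\psi\mapsto\psi/(\lambda r)$). The semilinear term is again harmless because its contribution to the energy difference is $O(r^{n+1})$, beaten by the $\lambda^2 r^n$ gain.
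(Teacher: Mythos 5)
Your overall strategy is the classical Alt--Caffarelli scheme, which is also what the paper uses: the subsolution property of the minimizer, a PDE-replacement competitor plus energy comparison for the Lipschitz bound, and an annular cutoff competitor plus energy comparison (with absorption) for non-degeneracy, treating $F(\psi)$ and the $x$-dependence of $K$ as lower-order perturbations. The only organizational difference in part (i) is that the paper bounds the quotient $\psi(X)/\mathrm{dist}(X,\{\psi=0\})$ by rescaling around a point where it is large and comparing in a ball touching the zero set (the route of Lemma 2.2 in \cite{ACF84}), whereas you bound spherical averages at free boundary points in the style of \cite{AC81}; these are equivalent variants. Your part (ii) is essentially identical to the paper's proof (the competitor vanishing on an inner ball, solving the equation in the annulus, and the absorption argument yielding $\psi\equiv 0$ near $X_0$, contradicting $X_0\in\partial\{\psi>0\}$), modulo the slip that your barrier $w$ should equal $1$ (not $0$) on $\partial B_r$ so that $v=\min(\psi,C\varepsilon r\,w)$ actually agrees with $\psi$ there.

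There is, however, one step in your part (i) that fails as written. Minimality tested against the replacement $h$ gives
\begin{equation*}
\int_{B_r}\left<K\nabla(h-\psi),\nabla(h-\psi)\right>dX\;\le\;\lambda^2\bigl|\{\psi=0\}\cap B_r\bigr|+\text{(lower order)},
\end{equation*}
i.e.\ the \emph{reverse} of the inequality you display: the Dirichlet energy gained by replacing $\psi$ with $h$ is paid for by the increase of the $\chi$-term, not the other way around. Moreover, your lower bound for the left-hand side invokes the claim that $\{\psi=0\}$ occupies a definite fraction of $B_r$; this density estimate is not available at this stage (it is normally a consequence of Lipschitz continuity and non-degeneracy, so using it here is circular), and it is not needed in the correct argument. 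The correct wiring is: (a) from minimality, $\int_{B_r}|\nabla(h-\psi)|^2\le C\lambda^2|\{\psi=0\}\cap B_r|$; (b) from the Harnack/Poisson estimate, $h\ge c\,\meantext{\partial B_r}\psi$ on $B_{r/2}$, so that on $\{\psi=0\}$ the difference $h-\psi$ is at least $c\,\meantext{\partial B_r}\psi$, which via the representation-formula (or $L^1$-gradient) estimate of Lemma 3.2 in \cite{AC81} yields $\meantext{\partial B_r}\psi\cdot|\{\psi=0\}\cap B_{r/4}|\le Cr\int_{B_r}|\nabla(h-\psi)|$; combining (a) and (b) by Cauchy--Schwarz makes the measure of the zero set cancel and gives the dichotomy: either $|\{\psi=0\}\cap B_r|=0$ (excluded at a free boundary point by the strong maximum principle) or $\meantext{\partial B_r}\psi\le C\lambda r$. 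With this repair your plan goes through and matches the paper's conclusion.
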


The whole Section 2 is dedicated to the proof of Theorem A by virtue of the property of the minimizer.

The second result shows that the free boundary $\partial\{\psi>0\}$ is $C^{1,\alpha}$ regular.

\begin{thmB}
	Suppose $\psi$ is a local minimizer of $\mathcal{J}_h$ in $B_R(X_0)$, where $X_0\in\partial\{\psi>0\}$ is a free boundary point. Then there exists $R_0=R_0(\lambda,F_0,X_0)$ such that $\partial\{\psi>0\}\cap B_{R_0}(X_0)$ is a $C^{1,\alpha}$ graph for some $\alpha\in(0,1)$.
\end{thmB}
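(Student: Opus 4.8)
The plan is to run the classical Alt--Caffarelli--Caffarelli--De Silva scheme ``flatness implies $C^{1,\alpha}$'' for the weighted semilinear problem \eqref{eq1}, regarding the oscillation of $K$ and the source $f(\psi)$ as lower order perturbations of the constant coefficient Alt--Caffarelli problem. First I would extract from Theorem~A the weak regularity of the free boundary: the Lipschitz bound together with the non-degeneracy yields uniform density estimates $0<c\le|\{\psi>0\}\cap B_r(X)|/|B_r|\le 1-c$ at every free boundary point $X$, local finiteness of the perimeter of $\{\psi>0\}$ with $\mathcal H^1(\partial\{\psi>0\}\cap B_r)\le Cr$, and a representation $\operatorname{div}(K\nabla\psi)=f(\psi)\chi_{\{\psi>0\}}+\lambda\,q_\psi\,\mathcal H^1\mres\partial\{\psi>0\}$ with $q_\psi$ bounded between two positive constants on the reduced free boundary. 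Since $K$ is smooth, symmetric and uniformly elliptic, these follow as in \cite{AC81} from the De Giorgi--Nash--Moser theory and the comparison principle for $\operatorname{div}(K\nabla\cdot)$.

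The heart of the argument begins with a blow-up analysis at a fixed free boundary point $X_0$. Setting $\psi_r(X):=r^{-1}\psi(X_0+rX)$, which is a local minimizer of $\int\big(\langle K(X_0+rX)\nabla\psi_r,\nabla\psi_r\rangle-2F(r\psi_r)+\lambda^2\chi_{\{\psi_r>0\}}\big)\,dX$, the uniform estimates give, along a subsequence, convergence in $C^{0,\gamma}_{\mathrm{loc}}\cap H^1_{\mathrm{loc}}$ with $\chi_{\{\psi_{r_k}>0\}}\to\chi_{\{\psi_0>0\}}$ in $L^1_{\mathrm{loc}}$ and Hausdorff convergence of the free boundaries. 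Since $K(X_0+rX)\to K(X_0)$ uniformly and $F(r\psi_r)\to0$, the limit $\psi_0$ minimizes the constant coefficient functional $\int\big(\langle K(X_0)\nabla v,\nabla v\rangle+\lambda^2\chi_{\{v>0\}}\big)$; the linear substitution $v(Y)=\psi_0(K(X_0)^{1/2}Y)$ converts this into a positive multiple of the Alt--Caffarelli functional with the same constant $\lambda$, so the well-known two-dimensional classification of blow-up limits (as in \cite{AC81}) forces $v(Y)=\lambda(Y\cdot e)^+$, i.e.\ $\psi_0(X)=\lambda\,(X\cdot K(X_0)^{-1/2}e)^+$ is a half-plane solution compatible with $\langle K(X_0)\nabla\psi_0,\nabla\psi_0\rangle=\lambda^2$. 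A standard compactness argument then upgrades this into a quantitative statement: for every $\varepsilon>0$ there is $R_0$ such that $\partial\{\psi>0\}\cap B_{R_0}(X_0)$ lies in an $\varepsilon R_0$-strip about a line.

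It remains to improve the flatness and iterate. After verifying that $\psi$ is a viscosity solution of \eqref{eq1} in the $K$-weighted sense (by constructing radial comparison barriers for $\operatorname{div}(K\nabla\cdot)=f(\cdot)$ and using the non-degeneracy), I would prove a De Silva-type improvement lemma: there exist universal $\bar\varepsilon,\rho\in(0,1)$ and $C>0$ such that if $\partial\{\psi>0\}$ is $\varepsilon$-flat in $B_1$ in a direction $e$ with $\varepsilon\le\bar\varepsilon$, while $\|K-K(X_0)\|_{L^\infty(B_1)}$ and the rescaled source are $\varepsilon$-small, then $\partial\{\psi>0\}$ is $(\rho\varepsilon/2)$-flat in $B_\rho$ in a direction $e'$ with $|e'-e|\le C\varepsilon$. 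The proof is the usual contradiction/compactness argument: after normalizing $K(X_0)$ to the identity and rescaling to $B_1$ at scale $r$, the equation reads $\Delta\psi_r=O(r)$ with a uniformly elliptic $O(r)$ perturbation and free boundary slope $\lambda+O(r)$, the flattened limit solves the linearized Neumann problem ($\Delta w=0$ in the half-ball, $\partial_\nu w=0$ on the flat piece), and the interior $C^{1,\alpha}$ estimate for that problem yields the gain. Iterating at the dyadic scales $\rho^k R_0$ produces a Cauchy sequence of unit normals $e_k$ with $|e_{k+1}-e_k|\lesssim\rho^{k\alpha}$, so $\partial\{\psi>0\}$ possesses a Hölder-continuously rotating tangent line at $X_0$ and is therefore a $C^{1,\alpha}$ graph near $X_0$; as $X_0$ was an arbitrary free boundary point, Theorem~B follows.

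The main obstacle is this last step. One must guarantee that the oscillation of $K$ and the semilinear term $f(\psi)$ are genuinely lower order at \emph{every} dyadic scale, so that they are absorbed into the quadratic error of the linearization and do not accumulate over the infinitely many iterations, and that the limiting linearized operator is the clean (half-)Laplacian rather than a first-order-corrupted one. A secondary technical ingredient is the construction of $K$-adapted comparison functions underpinning both the viscosity-solution property and the partial boundary Harnack inequality at the core of De Silva's method; once these are in place, the remainder is a lengthy but essentially routine transcription of the constant coefficient theory.
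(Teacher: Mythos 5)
Your proposal follows essentially the same route as the paper: blow up at the free boundary point, freeze the coefficients via the change of variables $Y=X_0+K^{1/2}(X_0)X$ to reduce to (an almost-minimizer of) the Alt--Caffarelli functional, identify the blow-up limit as the half-plane solution $\lambda\left(K^{-1/2}(X_0)X\cdot\nu_0\right)^+$, deduce $\epsilon$-flatness at a small scale, and conclude by the De Silva ``flatness implies $C^{1,\alpha}$'' criterion. The only substantive difference is that the ``main obstacle'' you flag (non-accumulation of the coefficient oscillation and of $f(\psi)$ over the dyadic iteration) is not reproved in the paper: the authors invoke De Silva's Theorem 1.1 for viscosity solutions with $C^{0,\beta}$ coefficients and bounded right-hand side as a black box, checking that after rescaling $\Vert a_{ij}-\delta_{ij}\Vert_{L^\infty}$, $[a_{ij}]_{C^{0,\beta}}$ and $\Vert r_k f(r_k\psi_k)\Vert_{L^\infty}$ are all $O(r_k)$, and they obtain the homogeneity and half-plane classification of the blow-up through the Weiss monotonicity formula for almost-minimizers (citing the almost-minimizer literature) rather than through the exact-minimizer classification of \cite{AC81} that you use.
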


The outline of the proof of Theorem B is stated in the beginning of Section 3.

\begin{cor}
	The 3-dimensional free boundary $\partial\Omega$ in the helical free boundary problem (\ref{Euler}) is a $C^{1,\alpha}$ surface for some $\alpha\in(0,1)$.
\end{cor}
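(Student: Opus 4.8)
The plan is to deduce the corollary directly from Theorem~B, by transporting the two–dimensional regularity of the cross–sectional free boundary $\partial\Omega_0=\partial\{\psi>0\}$ to the three–dimensional surface $\partial\Omega$ along the helical group action; the key point is that the reconstruction is effected by a real–analytic diffeomorphism of $\mathbb{R}^3$, so that no regularity is lost in passing from the slice to the full domain.

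First I would record the link between the analytic and the geometric objects. By the derivation in the physical setting (and the variational characterization in Appendix~\ref{appe2}), the stream function $\psi$ on $\Omega_0$, normalized so that $\psi\equiv0$ on $\partial\Omega_0$, is a local minimizer of $\mathcal{J}_h$ near any free boundary point $X_0\in\partial\Omega_0$. Hence Theorem~B applies and yields $R_0>0$ such that $\partial\{\psi>0\}\cap B_{R_0}(X_0)=\partial\Omega_0\cap B_{R_0}(X_0)$ is a $C^{1,\alpha}$ graph for some $\alpha\in(0,1)$.

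Next I would introduce the reconstruction map
$$\mathcal{T}:\mathbb{R}^2\times\mathbb{R}\to\mathbb{R}^3,\qquad \mathcal{T}(x,y,\rho)=S_\rho^\kappa\big((x,y,0)^T\big)=\big(x\cos\rho+y\sin\rho,\ -x\sin\rho+y\cos\rho,\ \kappa\rho\big)^T,$$
which is a real–analytic diffeomorphism of $\mathbb{R}^3$ onto itself, with real–analytic inverse $\mathcal{T}^{-1}(x,y,z)=\big(x\cos\tfrac{z}{\kappa}-y\sin\tfrac{z}{\kappa},\ x\sin\tfrac{z}{\kappa}+y\cos\tfrac{z}{\kappa},\ \tfrac{z}{\kappa}\big)$. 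Since $\Omega$ is a $\kappa$–helical domain, Definition~\ref{hvf}(i) gives $S_\rho^\kappa\Omega=\Omega$ for every $\rho$, and combined with the definition of $\Omega_0$ this shows $\mathcal{T}(\Omega_0\times\mathbb{R})=\Omega$, hence also $\mathcal{T}(\partial\Omega_0\times\mathbb{R})=\partial\Omega$. Near $(X_0,0)$ the set $\partial\Omega_0\times\mathbb{R}$ is the product of a $C^{1,\alpha}$ arc with a line, hence a $C^{1,\alpha}$ embedded $2$–surface in $\mathbb{R}^3$; because $C^{1,\alpha}$ regularity of an embedded hypersurface is preserved by a real–analytic (indeed any $C^{1,\alpha}$) diffeomorphism — a local $C^{1,\alpha}$ graph is pushed forward to a local $C^{1,\alpha}$ graph by the implicit function theorem in Hölder spaces — I conclude that $\partial\Omega=\mathcal{T}(\partial\Omega_0\times\mathbb{R})$ is $C^{1,\alpha}$ near $\mathcal{T}(X_0,0)=(X_0,0)$. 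Letting $\rho$ range over $\mathbb{R}$ and $X_0$ over all free boundary points of $\partial\Omega_0$ covers the entire free part of $\partial\Omega$.

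I do not expect a serious obstacle beyond Theorem~B itself; the only steps needing care are verifying that $\mathcal{T}$ genuinely intertwines the slice and the spatial domain (a direct computation from Definition~\ref{hvf}(i)) and making the assertion ``a real–analytic diffeomorphism preserves $C^{1,\alpha}$'' precise. An equivalent, more hands–on route avoids the ambient diffeomorphism: locally write $\partial\Omega_0$ as a regular $C^{1,\alpha}$ curve $t\mapsto\gamma(t)$ and consider $\Phi(t,\rho)=S_\rho^\kappa\big((\gamma(t),0)^T\big)$; its partial derivatives are $\partial_t\Phi=(R_\rho\gamma'(t),0)^T$, a nonzero horizontal vector, and $\partial_\rho\Phi=\bm\xi_\kappa(\Phi(t,\rho))$, whose third component is $\kappa\neq0$, so they are linearly independent and $\Phi$ is a $C^{1,\alpha}$ immersion, locally an embedding. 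Either way, the helical symmetry is exactly what makes the regularity uniform over all cross–sections, since the coefficient matrix $K$ is equivariant under rotations about the $z$–axis, $K(R_\rho X)=R_\rho K(X)R_\rho^T$.
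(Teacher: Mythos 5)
Your argument is correct, but it takes a different route from the paper. You transport the regularity purely geometrically: $\partial\Omega$ is the image of $\partial\{\psi>0\}\times\mathbb{R}$ under the real-analytic diffeomorphism $\mathcal{T}(x,y,\rho)=S_\rho^\kappa((x,y,0)^T)$, so the local $C^{1,\alpha}$ graph property from Theorem B is pushed forward unchanged; your verification that $\mathcal{T}$ intertwines slice and spatial domain, and the alternative immersion $\Phi(t,\rho)$ with $\partial_\rho\Phi=\bm\xi_\kappa(\Phi)$, are both sound (the equivariance $K(R_\rho X)=R_\rho K(X)R_\rho^T$ is true but not actually needed). The paper instead argues through the velocity field: using the orthogonality $\bm u\cdot\bm\xi_\kappa=0$ and the tangency $\bm\xi_\kappa\cdot\bm n=0$ it writes the unit normal as $\bm n=\bm u\times\bm\xi_\kappa/|\bm u\times\bm\xi_\kappa|$, invokes boundary elliptic (Schauder-type) estimates to get $\psi\in C^{1,\alpha}$ up to $\overline{\{\psi>0\}}$ and hence $\bm u\in C^{\alpha}_{\rm loc}(\overline\Omega)$, and checks that the denominator does not degenerate because $|\bm u|>0$ on the free boundary (from $\langle K\nabla\psi,\nabla\psi\rangle=\lambda^2$) and $|\bm\xi_\kappa|\geq|\kappa|$; a H\"older-continuous normal then gives the $C^{1,\alpha}$ surface. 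Your route is more elementary—it needs only the statement of Theorem B and the smoothness of the screw transformation, with no boundary estimates for $\psi$ and no non-degeneracy of $|\bm u\times\bm\xi_\kappa|$—while the paper's route yields extra information as a byproduct: an explicit formula for $\bm n$ and the H\"older continuity of the velocity up to the free boundary. One small point of care: you assert that the physical stream function is a local minimizer of $\mathcal{J}_h$; the paper only proves the implication minimizer $\Rightarrow$ solution, so strictly speaking the corollary (in both your version and the paper's) is stated within the variational framework where the solution under consideration is the one arising from the minimizer, and your phrasing should reflect that rather than claim the converse.
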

\begin{proof}
	Theorem B indicates that after the smooth helical transformation, the boundary of the 3-dimensional helical domain $\Big\{ (x,y,z) \ | \ S_{-\frac{z}{\kappa}}^\kappa(x,y,z)\in\{\psi>0\}\times\{z=0\} \Big\}$ is locally a $C^{1,\alpha}$ surface. In fact, the orthogonal condition (\ref{ortho}) that $\bm u\cdot\bm\xi_\kappa=0$ and the relationship (\ref{eq2}) that $\bm\xi_\kappa\cdot\bm n=0$ imply the relationship
	$$\bm n(x,y,z)=\frac{\bm u(x,y,z)\times\bm\xi_\kappa}{|\bm u(x,y,z)\times\bm\xi_\kappa|}$$
	with
	$$\bm u(x,y,z)=R_{\frac{z}{\kappa}}\bm u (S_{-\frac{z}{\kappa}}^\kappa(x,y,z))=R_{\frac{z}{\kappa}}\bm u (\bar{x},\bar{y},0),$$
	where
	$$x=\bar{x}\cos\frac{z}{\kappa} + \bar{y}\sin\frac{z}{\kappa}, \quad y=-\bar{x}\sin\frac{z}{\kappa}+\bar{y}\cos\frac{z}{\kappa}$$
	and
	\begin{equation*}
	\bm u(\bar{x},\bar{y},0)=\frac{1}{\kappa^2+\bar{x}^2+\bar{y}^2}
	\begin{pmatrix}
	 -\bar{x}\bar{y}\frac{\partial\psi}{\partial x} + (\kappa^2+\bar{x}^2)\frac{\partial\psi}{\partial y}\\
	 -(\kappa^2+\bar{y}^2)\frac{\partial\psi}{\partial x} + \bar{x}\bar{y}\frac{\partial\psi}{\partial y}\\
	-\kappa \bar{x}\frac{\partial\psi}{\partial x} + -\kappa \bar{y}\frac{\partial\psi}{\partial y} \\
	\end{pmatrix}.
	\end{equation*}
	The boundary estimate in elliptic theory gives that $\psi(x,y)\in C^{1,\alpha}(\mathbb{R}^2\cap\overline{\{\psi>0\})}$, hence $\bm u$ is locally a $C^{\alpha}$ vector field in $\overline{\Omega}$. Notice that $|\bm u(x,y,z)\times\bm\xi_\kappa|$ cannot degenerate since $|\bm u|>0$ and $|\bm\xi_\kappa|\geq|\kappa|>0$ on $\partial\Omega$, hence $\bm n=\frac{\bm u(x,y,z)\times\bm\xi_\kappa}{|\bm u(x,y,z)\times\bm\xi_\kappa|}\in C^{\alpha}(\mathbb{R}^3;\mathbb{R}^3)$, which ensures that $\partial\Omega$ is locally a $C^{1,\alpha}$ surface.
\end{proof}

\begin{rem}
	Our results can be extended to more general minimizing problems of the form
	$$\min_{\psi\in\mathcal{A}} \mathcal{J}_{g}=\int_D \left( \left<A(X)\nabla\psi,\nabla\psi\right> - 2F(\psi) +Q(X)\chi_{\{\psi>0\}} \right)dX,$$
	where $A(X)=(a_{ij}(X))_{2\times2}$ is a symmetric matrix, $a_{ij}(X)$ and $Q(X)$ are H\"older continuous functions satisfying
	$$\frac{1}{C}|\bm \nu|^2\leq \Sigma_{i,j} a_{ij}\nu_i\nu_j \leq C|\bm\nu|^2\quad \text{for any} \quad \bm\nu=(\nu_1,\nu_2)\in\mathbb{R}^2 \quad \text{and some} \quad C>0$$
	and non-degenerate condition
	$$Q(X)\geq c_0>0.$$
\end{rem}

\begin{rem}
	The higher regularity of the free boundary $\partial\{\psi>0\}$ can be achieved through a bootstrap argument utilizing the Schauder estimate in the elliptic theory, provided that $f(\psi)$ is smooth enough. In fact, we have $K(x,y)\in C^\infty(\mathbb{R}^2)$ and if we assume additionally that $f\in C^\infty(\mathbb{R})$, then the free boundary $\partial\{\psi>0\}$ is $C^\infty$, and after the helical transformation the free boundary in three dimensions obtains the $C^\infty$-regularity.
\end{rem}

It is important to highlight that working with a functional with variable coefficients, instead of constant ones, leads to some differences from established results concerning Laplacian operators. The main difficulty arises from the lack of the Weiss monotonicity formula for the uniformly semilinear elliptic equation in the blow-up analysis. Through the change of coordinates, we freeze the variable coefficients to reduce the non-constant-coefficients case (as a minimizer) to the constant-coefficients one (as an almost-minimizer), and this adaption permits the blow-up analysis for almost-minimizers. Our results can be applied in the regularity research on the free boundary problem of 3D helical Euler equations, since we have established a dimensional reduction framework that recovers two-dimensional solutions within three-dimensional configurations. 

\section{Regularity of the stream function}

At the beginning of this section, we first give a fundamental properties of the local minimizer $\psi$, which is substantial in proving Theorem A.

\begin{prop}
	Suppose $\psi$ is a local minimizer of $\mathcal{J}_h$ in $B_R(X_0)$, where $X_0\in\partial\{\psi>0\}$ is a free boundary point. Then $\psi$ satisfies
	$$ div \left( K(x,y)\nabla\psi \right) + f(\psi) \geq0 \quad \text{in} \quad B_R(X_0)$$
	in the weak sense.
\end{prop}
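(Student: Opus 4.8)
The plan is to test the minimality of $\psi$ against one-sided perturbations that can only \emph{decrease} the positivity set, so that the measure term $\lambda^2\chi_{\{\psi>0\}}$ contributes a non-positive amount and does not obstruct the inequality. Concretely, fix a non-negative $\varphi\in C_c^\infty(B_R(X_0))$ and for $t>0$ consider the competitor $\psi_t:=(\psi-t\varphi)^+$. Since $\psi_t=\psi$ on $\partial B_R(X_0)$ and $\psi_t\geq 0$, it is admissible, and $\{\psi_t>0\}\subset\{\psi>0\}$, hence $\chi_{\{\psi_t>0\}}\leq\chi_{\{\psi>0\}}$ pointwise. From $\mathcal J_h(\psi,B_R(X_0))\leq\mathcal J_h(\psi_t,B_R(X_0))$ we therefore obtain, after cancelling and discarding the (favorable) measure term,
\begin{equation*}
\int_{B_R(X_0)}\!\Big(\langle K\nabla\psi,\nabla\psi\rangle-2F(\psi)\Big)\,dX
\;\leq\;\int_{B_R(X_0)}\!\Big(\langle K\nabla\psi_t,\nabla\psi_t\rangle-2F(\psi_t)\Big)\,dX.
\end{equation*}

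Next I would expand the right-hand side to first order in $t$. On $\{\psi>t\varphi\}$ one has $\psi_t=\psi-t\varphi$ and $\nabla\psi_t=\nabla\psi-t\nabla\varphi$, so the quadratic form expands as $\langle K\nabla\psi,\nabla\psi\rangle-2t\langle K\nabla\psi,\nabla\varphi\rangle+t^2\langle K\nabla\varphi,\nabla\varphi\rangle$; on the complement $\{0<\psi\leq t\varphi\}$ one has $\psi_t\equiv 0$, and this region has measure $o(1)$ as $t\to 0$ (it is contained in $\{0<\psi\leq t\|\varphi\|_\infty\}$, whose measure tends to $0$). For the potential term, $F$ is $C^1$ with bounded derivative, so $F(\psi_t)=F(\psi)-tf(\psi)\varphi+o(t)$ on $\{\psi>t\varphi\}$, while on the exceptional set $|F(\psi)-F(\psi_t)|\leq \|f\|_\infty\, t\varphi$ integrated over a set of measure $o(1)$, again $o(t)$. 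Collecting terms, dividing by $t>0$, and letting $t\downarrow 0$ yields
\begin{equation*}
0\;\leq\;\int_{B_R(X_0)}\!\Big(-\langle K(x,y)\nabla\psi,\nabla\varphi\rangle+f(\psi)\varphi\Big)\,dX,
\end{equation*}
which is precisely the statement that $\dvg(K\nabla\psi)+f(\psi)\geq 0$ in the weak (distributional) sense, since $\varphi\geq 0$ was arbitrary.

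The main obstacle is the careful bookkeeping of the exceptional region $\{0<\psi\leq t\varphi\}$ where the competitor has been truncated: one must show its contribution to both the Dirichlet-type term and the potential term is genuinely $o(t)$, which uses the absolute continuity of Lebesgue measure together with the local boundedness of $\nabla\psi$ in $L^2$ and of $f$. A minor preliminary point is to record that $\psi\in W^{1,2}_{\mathrm{loc}}$ suffices to make all the integrals finite and to justify $\nabla(\psi-t\varphi)^+=\chi_{\{\psi>t\varphi\}}(\nabla\psi-t\nabla\varphi)$ a.e.; no Lipschitz bound on $\psi$ is needed here, so this proposition can legitimately precede the proof of Theorem A. Finally, one should note the one-sidedness is essential: perturbing in the direction $\psi+t\varphi$ would \emph{enlarge} $\{\psi>0\}$ and the term $\lambda^2(\chi_{\{\psi_t>0\}}-\chi_{\{\psi>0\}})\geq 0$ would then spoil the sign, which is exactly why only a subharmonicity-type inequality, and not an equation, holds across the free boundary.
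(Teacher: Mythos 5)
Your argument is correct and follows essentially the same route as the paper: perturb downward by a non-negative test function so that the positivity set shrinks, discard the favorable $\lambda^2\chi_{\{\cdot>0\}}$ term, and expand the remaining terms to first order in $t$. The only difference is that the paper uses the competitor $\psi-\epsilon\phi$ directly (admissible since local competitors need not be non-negative), whereas you truncate to $(\psi-t\varphi)^+$ and therefore carry the extra, correctly handled, $o(t)$ bookkeeping on the set $\{0<\psi\leq t\varphi\}$.
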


\begin{proof}
	Let $\phi\in C_0^\infty(B_R(X_0))$ be any non-negative function and $\epsilon>0$. Since $\psi$ is a local minimizer in $B_R(X_0)$, we obtain
	\begin{equation*}
	\begin{aligned}
	0 &\leq \mathcal{J}_h(\psi-\epsilon\phi,B_R(X_0)) - \mathcal{J}_h(\psi,B_R(X_0)) \\
	&\leq \int_{B_R(X_0)} \left( -2\epsilon\left<K(x,y)\nabla\psi,\nabla\phi\right> -2(F(\psi-\epsilon\phi)-F(\psi)) \right)dX + o(\epsilon),
	\end{aligned}
	\end{equation*}
	where $o(\epsilon)$ denotes the infinitesimal of the higher order than the quantity $\epsilon$. Notice that we have used the facts that $K(x,y)$ is symmetric and $\{\psi-\epsilon\phi>0\}\subset\{\psi>0\}$. Dividing both sides of the inequality by $\epsilon$, we obtain
	$$\int_{B_R(X_0)} \left( \left<K(x,y)\nabla\psi,\nabla\phi\right> -f(\psi)\phi \right)dX \leq0,$$
	which gives the desired result.
\end{proof}

\subsection{Lipschitz regularity}

In this subsection we will prove that the local minimizer $\psi$ of $\mathcal{J}_h$ is Lipschitz continuous. The original argument was proposed by Alt and Caffarelli in \cite{AC81} for the minimum problem with Laplacian operator. The method was fully developed for two-phase Laplacian minimum problem in \cite{ACF84-2}, for quasilinear operators in \cite{ACF84} and for nonlinear operators in \cite{OY90}. We sketch the proof here for the sake of completeness.

\begin{proof}[Proof of Theorem A (i)]
	Denote $\rho(X):=dist(X,\{\psi=0\})$ with $X=(x,y)\in B_R(X_0)$. We first claim that there is a uniform constant $C=C(X_0,F_0,\kappa,r,R,\lambda)$ independent of $X$ such that for any $r<R$,
	$$\frac{\psi(X)}{\rho(X)}\leq C \quad \text{in} \quad B_r(X_0),$$
	which suffices to show the desired conclusion. To this end, let $\rho_0=R-r$ and $X_1=(x_1,y_1)\in B_r(X_0)\cap\{\psi>0\}$ with $\rho(X_1)<\rho_0$. Assume that
	$$\frac{\psi(X_1)}{\rho(X_1)}\geq M$$
	and consider the scaled sequence
	$$\phi_\rho(X):=\frac{\psi(X_1+\rho X)}{\rho} \quad \text{with} \quad \rho=\rho(X_1).$$
	Then $\phi_\rho$ satisfies
	$$div( K(X_1+\rho X)\nabla\phi_\rho ) + \rho f(\rho\phi_\rho) = 0 \quad \text{in} \quad B_1(0).$$
	The Harnack inequality in \cite{GT77} gives that
	$$\phi_\rho(X)\geq c\phi_\rho(0)-CF_0\geq cM-CF_0 \quad \text{in} \quad B_{3/4}(0)$$
	with $c$ and $C$ depend only on $X_0$ and $F_0$.
	
	On the other hand, let $\Psi(X)$ be the solution to
	\begin{equation*}
	\begin{cases}
	div(K(X_1+\rho X)\nabla\Psi) + \rho f(\rho\Psi) = 0 \quad \text{in} \quad B_1(X_2), \\
	\Psi=\phi_\rho \qquad\qquad\qquad\qquad\qquad\qquad\quad\ \text{on} \quad \partial B_1(X_2),
	\end{cases}
	\end{equation*}
	where $X_2=(x_2,y_2)\in\partial B_1(0)\cap\{\phi_\rho=0\}$. Since $div(K(X_1+\rho X)\nabla\phi_\rho) + \rho f(\rho\phi_\rho) \geq 0$ in $B_1(X_2)$, we can deduce from the maximum principle that
	$$\Psi(X)\geq\phi_\rho(X) \quad \text{in} \quad B_1(X_2).$$
	Furthermore, we have
	\begin{align*}
	0 &\leq \int_{B_1(X_2)} \left[ \left<K(X_1+\rho X)\nabla\Psi,\nabla\Psi\right> - 2F(\rho\Psi) +\lambda^2\chi_{\{\Psi>0\}} \right]dX \\
	&\quad - \int_{B_1(X_2)} \left[ \left<K(X_1+\rho X)\nabla\phi_\rho,\nabla\phi_\rho\right> - 2F(\rho\phi_\rho) +\lambda^2\chi_{\{\phi_\rho>0\}} \right]dX \\
	&=\int_{B_1(X_2)} \left[ -\left<K(X_1+\rho X)\nabla(\Psi-\phi_\rho),\nabla(\Psi-\phi_\rho)\right> \right. \\
	&\quad \left. + 2\left<K(X_1+\rho X)\nabla\Psi,\nabla(\Psi-\phi_\rho)\right> \right]dX +\int_{B_1(X_2)} -2(F(\rho\Psi)-F(\rho\phi_\rho))dX \\
	&\quad +\int_{B_1(X_2)} \lambda^2 \left( \chi_{\{\Psi>0\}}-\chi_{\{\phi_\rho>0\}} \right)dX \\
	&\leq \int_{B_1(X_2)} -\left<K(X_1+\rho X)\nabla(\Psi-\phi_\rho),\nabla(\Psi-\phi_\rho)\right>dX + \int_{B_1(X_2)} \lambda^2\chi_{\{\phi_\rho=0\}}dX,
	\end{align*}
	which implies that
	\begin{equation*}
	\begin{aligned}
	\int_{B_1(X_2)}|\nabla(\Psi-\phi_\rho)|^2dX &\leq C(\kappa,r,R)\int_{B_1(X_2)}\left<K(X_1+\rho X)\nabla(\Psi-\phi_\rho),\nabla(\Psi-\phi_\rho)\right>dX \\
	&\leq C(\kappa,r,R)\lambda^2\int_{B_1(X_2)}\chi_{\{\phi_\rho=0\}}dX.
	\end{aligned}
	\end{equation*}
	Hence we get the upper bound of the $L^2$-norm of $\nabla(\Psi-\phi_\rho)$. With the help of the auxiliary function $\phi(X)=(cM-CF_0)(e^{-\mu|X-X_2|^2}-e^{-\mu})$ for large enough $\mu$, we obtain
	$$\Psi(X)\geq (cM-CF_0)(1-|X-X_2|^2) \quad \text{in} \quad B_1(X_2).$$
	Proceeding as in Lemma 2.2 in \cite{ACF84}, we get
	$$(cM-CF_0)^2 \leq C(X_0,F_0,\kappa,r,R,\lambda).$$
	Thus
	$$M\leq C(X_0,F_0,\kappa,r,R,\lambda).$$
	Utilizing Harnack inequality and the finite covering property in $B_r(X^0)$, we have
	$$\frac{\psi(X)}{\rho(X)}\leq C(X_0,F_0,\kappa,r,R,\lambda) \quad \text{in} \quad B_r(X_0).$$
	This completes the proof.
\end{proof}

\subsection{Non-degeneracy}

In this subsection we establish the non-degeneracy of the minimizer $\psi$. The methodology comes from Alt and Caffarelli in \cite{AC81}, and the readers can refer to the lecture notes \cite{V23} for more versatile approaches adapted to almost minimizers.

\begin{proof}[Proof of Theorem A (ii)]
	It suffices to prove that for any $\delta\in(0,1)$, there exists a constant $c^*=c^*(\delta)$ such that if the inequality
	$$\frac1r \left( \fint_{\partial B_r(X_0)}\psi^2 dX \right)^{1/2}\leq c^*$$
	holds for $B_r(X)\subset B_R(X^0)$ with any small $r<R$ small enough, then it implies that
	$$\psi(X)\equiv0 \quad \text{in} \quad B_{\delta r}(X).$$
	
	Let $X_1\in B_R(X_0)$ and define
	$$\phi_r(x):=\frac{\psi(X_1+rX)}{r} \quad \text{for} \quad X\in B_{R/r}(0)$$
	with $B_r(X)\subset B_R(X_0)$. Then $\phi_r$ satisfies
	$$div( K(X_1+rX)\nabla\phi_r ) + rf(\rho\phi_r) \geq 0 \quad \text{in} \quad B_1.$$
	Denote $\epsilon:=\sup_{B_{\sqrt{\delta}}(0)}\phi_r$ for simplicity. It follows from Theorem 8.17 in \cite{GT77} that
	$$\epsilon\leq C\left[ (\fint_{B_1(0)}\phi_r^2 dX)^{1/2} + rF_0 \right] \leq C(c^*+rF_0),$$
	where $C=C(\kappa,\delta)$. Let the comparison function $\phi$ be defined as
	\begin{equation*}
	\begin{cases}
	div(K(X_1+rX)\nabla\phi)+rf(r\phi)=0 \quad \text{in} \quad B_{\sqrt{\delta}}(0)\backslash B_\delta(0), \\
	\phi=0 \qquad\qquad\qquad\qquad\qquad\qquad\quad\ \ \text{in} \quad B_\delta(0), \\
	\phi=\phi_r \qquad\qquad\qquad\qquad\qquad\qquad\quad\, \text{on} \quad \partial B_{\sqrt{\delta}}(0).
	\end{cases}
	\end{equation*}
	The elliptic estimate indicates that
	$$\sup_{\partial B_\delta(0)}|\nabla\phi|\leq C(X_0,\kappa,\delta,F_0)(\epsilon+rF_0).$$
	Meanwhile, set $\Phi=\min(\phi,\phi_r)$ and since $\psi$ is a local minimizer of $\mathcal{J}_h$ in $B_R(X_0)$, we obtain
	\begin{align*}
	&\int_{B_{\sqrt{\delta}}(0)} \left[ \left<K(X_1+rX)\nabla\phi_r,\nabla\phi_r\right>-2F(r\phi_r)+\lambda^2\chi_{\{\phi_r>0\}} \right]dX \\
	&\quad \leq \int_{B_{\sqrt{\delta}}(0)} \left[ \left<K(X_1+rX)\nabla\Phi,\nabla\Phi)\right>-2F(r\Phi)+\lambda^2\chi_{\{\phi>0\}} \right]dX
	\end{align*}
	and thus
	\begin{align*}
	&\int_{B_\delta(0)} \left[ \left<K(X_1+rX)\nabla\phi_r,\nabla\phi_r\right>-2F(r\phi_r)+\lambda^2\chi_{\{\phi_r>0\}} \right]dX \\
	&\quad \leq \int_{B_{\sqrt{\delta}}(0)\backslash B_\delta(0)} \left[ \left<K(X_1+rX)\nabla\Phi,\nabla\Phi\right>-\left<K(X_1+rX)\nabla\phi_r,\nabla\phi_r\right> \right. \\
	&\qquad\qquad \left. -2(F(r\Phi)-F(r\phi_r))+\lambda^2\chi_{\{\Phi>0\}}-\lambda^2\chi_{\{\phi_r>0\}} \right]dX \\
	&\quad \leq \int_{B_{\sqrt{\delta}}(0)\backslash B_\delta(0)} \left[ -\left<K(X_1+rX)\nabla(\Phi-\phi_r),\nabla(\Phi-\phi_r)\right> \right. \\
	&\qquad\qquad \left. +2\left<K(X_1+rX)\nabla\Phi,\nabla(\Phi-\phi_r)\right> -2(F(r\Phi)-F(r\phi_r)) \right] dX \\
	&\quad \leq \int_{B_{\sqrt{\delta}}(0)\backslash B_\delta(0)} \left[  2\left<K(X_1+rX)\nabla\Phi,\nabla(\Phi-\phi_r)\right>-2rf(r\Phi)(\Phi-\phi_r) \right]dX \\
	&\quad \leq 2\int_{\partial(B_{\sqrt{\delta}}(0)\backslash B_\delta(0))} (\phi-\phi_r)K(X_1+rX)\nabla\phi\cdot\nu d\mathcal{S} \\
	&\quad \leq C\int_{\partial B_\delta(0)} \phi_r|\nabla\phi|d\mathcal{S}.
	\end{align*}
	On the other hand, since $F(r\phi_r)\leq f(0)r\phi_r\leq F_0r\phi_r$, we can deduce that
	\begin{align*}
	& \int_{B_\delta(0)} \left( |\nabla\phi_r|^2 + \lambda^2\chi_{\{\phi_r>0\}} \right)dX \\
	&\leq C\int_{B_\delta(0)} \left[ \left<K(X_1+rX)\nabla\phi_r,\nabla\phi_r\right>-2F(r\phi_r)+\lambda^2\chi_{\{\phi_r>0\}} \right]dX + C\int_{B_\delta(0)} F_0r\phi_r dX \\
	&\leq C\left( \int_{\partial B_\delta(0)} \phi_r|\nabla\phi|d\mathcal{S} + \int_{B_\delta(0)}\phi_r dX \right) \\
	&\leq C(\epsilon+rF_0)\left( \int_{\partial B_\delta(0)}\phi_r d\mathcal{S} + \int_{B_\delta(0)} \phi_r dX \right).
	\end{align*}
	The subsequent proof follows similarly as in \cite{AC81}, utilizing the trace inequality and the H\"older inequality. More specifically,
	\begin{align*}
	\int_{\partial B_\delta(0)}\phi_r d\mathcal{S} + \int_{B_\delta(0)} \phi_r dX &\leq \int_{B_\delta(0)}\phi_r\chi_{\{\phi_r>0\}}dX + C\int_{B_\delta(0)}|\nabla\phi_r|\chi_{\{\phi_r>0\}}dX \\
	&\leq C\left( \frac{\epsilon+rF_0}{\lambda^2}+\frac{1}{\lambda} \right)\int_{B_\delta(0)} \left( |\nabla\phi_r|^2 + \lambda^2\chi_{\{\phi_r>0\}} \right)dX.
	\end{align*}
	Hence,
	$$\int_{B_\delta(0)} \left( |\nabla\phi_r|^2 + \lambda^2\chi_{\{\phi_r>0\}} \right)dX \leq C(\epsilon+rF_0)\left( \frac{\epsilon+rF_0}{\lambda^2}+\frac{1}{\lambda} \right)\int_{B_\delta(0)} \left( |\nabla\phi_r|^2 + \lambda^2\chi_{\{\phi_r>0\}} \right)dX,$$
	which indicates that
	$$\int_{B_\delta(0)} \left( |\nabla\phi_r|^2 + \lambda^2\chi_{\{\phi_r>0\}} \right)dX\equiv0$$
	provided that $\epsilon+rF_0$ is small enough, namely, $c^*$ is small enough. Consequently, $\phi_r\equiv0$ in $B_\delta(0)$, which leads to the desired result.
\end{proof}

\section{Regularity of the free boundary in a cross-section}

In this section we will demonstrate that the minimizer $\psi$ of $\mathcal{J}_h$ satisfies the criterion "flatness implies $C^{1,\alpha}$" through the blow-up analysis, thereby obtaining $C^{1,\alpha}$-regularity of the free boundary $\partial\{\psi>0\}$. Notably, while the rule "flatness implies $C^{1,\alpha}$" holds for our non-homogeneous elliptic operator as shown in \cite{S11}, it remains essential to verify that the minimizer $\psi$ possesses the "flatness" property. More specifically, we derive the explicit form of the blow-up limit at the free boundary point $X_0$ by way of freezing coefficients, obtaining a half-plane solution $\lambda\left( K^{-1/2}(X_0)X\cdot\nu_0 \right)^+$, which establishes the flatness improvement property of $\psi$.

The first tool in our analysis is an extension of the \emph{Weiss monotonicity formula}, originally developed in \cite{P83} and \cite{S84} for harmonic mappings. The \emph{Weiss boundary adjusted energy} in the Laplacian free boundary problem is defined as
\begin{align}
W(u,r)&= r^{-2}\int_{B_r} \left( |\nabla u|^2 +\lambda^2\chi_{\{u>0\}} \right)dX - r^{-3} \int_{\partial B_r} u^2d\mathcal{S},
\end{align}
which is related to the functional
\begin{equation}
\mathcal{J}_{\rm AC}(u,B_R)=\int_{B_R} \left( |\nabla u|^2 + \lambda^2\chi_{\{u>0\}} \right)dX,
\end{equation}
previously studied in \cite{AC81} and \cite{W99} with $r<R$ and $0\in\partial\{u>0\}$. We will perform a change of variables to freeze the coefficients and reduce to this case.

We start with the notation
\begin{equation*}
\mathcal{J}_h^{X_0,F}(\psi):= \int_{B_R(X_0)} \left[ \left<K(X_0)\nabla\psi,\nabla\psi\right> -2F(\psi) + \lambda^2\chi_{\{\psi>0\}} \right]dX.
\end{equation*}
Recall that $K(X)=(k_{ij}(X))_{2\times2}$ satisfies for some $C=C(\kappa,R)$ that
$$|k_{ij}(X)-k_{ij}(Y)|\leq C|X-Y| \quad \text{for} \quad i,j=1,2$$
and
$$\left<K(X)\nabla\psi,\nabla\psi\right>\geq C|\nabla\psi|^2 \quad \text{in} \quad B_R(X_0).$$
Direct calculation shows that if $\psi$ is a local minimizer of $\mathcal{J}_h$, then
\begin{align*}
\mathcal{J}_h^{X_0,F}(\psi) &= \mathcal{J}_h(\psi) + \int_{B_R(X_0)}\left<\left(K(X_0)-K(X)\right)\nabla\psi,\nabla\psi\right> dX \\
&\leq (1+CR)\mathcal{J}_h(\psi)
\end{align*}
for some $C=C(\kappa,R)$, and analogously $\mathcal{J}_h^{X_0,F}(\psi)\geq(1-C)R\mathcal{J}_h(\psi)$. Consequently, for any $\tilde{\psi}\in W^{1,2}(B_R(X_0))$ with $\tilde{\psi}=\psi$ on $\partial B_R(X_0)$, we obtain
$$\mathcal{J}_h^{X_0,F}(\psi)\leq \frac{1+CR}{1-CR}\mathcal{J}_h^{X_0,F}(\tilde{\psi}) \leq (1+C_0 R)\mathcal{J}_h^{X_0}(\tilde{\psi})$$
for some $C_0=(\kappa,R)$ when $R$ is small enough. This means that $\psi$ is locally an almost-minimizer of $\mathcal{J}_h^{X_0,F}$. Furthermore, noticing that
$$\int_{B_r(X_0)} -2F(\psi) dX\leq \int_{B_r(X_0)} -2\psi f(\psi)dX \leq \int_{B_r(X_0)} CF_0 r dX$$
in a small neighborhood $B_r(X_0)$ of the free boundary point $X_0$, we can deduce that $\psi$ is locally an almost-minimizer of the functional
$$\mathcal{J}_h^{X_0}(\psi):=\int_{B_r(X_0)} \left[ \left<K(X_0)\nabla\psi,\nabla\psi\right> + \lambda^2\chi_{\{\psi>0\}} \right]dX.$$

Now we transform the elliptic operator with fixed coefficient to the Laplacian operator through the change of variables. Set $Y=X_0+K^{1/2}(X_0)X$ with the matrix $K^{1/2}(X)$ satisfying $\left(K^{1/2}(X)\right)^2=K(X)$. The notation of $K^{1/2}(X)$ is well defined due to the matrix $K(X)$ being symmetric with positive eigenvalues. Define
\begin{equation}\label{u-1}
u(X)=\psi(X_0+K^{1/2}(X_0)X)=\psi(Y)
\end{equation}
for $X\in B:=\{X=(x,y)^T \ | \ X_0+K^{1/2}(X_0)X \in B_R(X_0)\}$. Hence,
\begin{align*}
\mathcal{J}_{\rm AC}(u) &=\int_{B} \left( |\nabla u|^2 + \lambda^2\chi_{\{u>0\}} \right)dX \\
&=\int_{B_R(X_0)} \left[ \left<K(X_0)\nabla\psi,\nabla\psi\right> + \lambda^2\chi_{\{\psi>0\}} \right]\left|\det K^{-1/2}(X_0)\right|dY \\
&=\left|\det K^{-1/2}(X_0)\right|\mathcal{J}_h^{X_0}(\psi),
\end{align*}
which indicates that $u$ is a local almost minimizer of the functional $\mathcal{J}_{\rm AC}$ in $B_R(X_0)$.

Notice that $u$ keeps the properties of Lipschitz regularity and non-degeneracy. A standard blow-up argument for the almost minimizer $u$ gives the following properties, and the readers can refer to \cite{DET19} or \cite{STV20} for more details.

\begin{lem} \label{u}
Let $u$ be the local almost minimizer of $\mathcal{J}_{\rm AC}$ defined as in (\ref{u-1}). Then the following properties hold.

(i) The right limit $W(u,0+):=\lim_{r\rightarrow0+}W(u,r)$ exists and is finite.

(ii) The blow-up limit
$$u_0:=\lim_{r_k\rightarrow0} u_k = \lim_{r_k\rightarrow0} \frac{u(r_kX)}{r_k}$$
is a $1$-homogeneous function, and the convergence $u_k\rightarrow u_0$ is strong in $W_{\rm loc}^{1,2}(\mathbb{R}^2)$ and locally uniform in $\mathbb{R}^2$ under unrelabeled subsequence.

(iii) There is a unique unit vector $\nu_0\in\partial B_1$ such that $u_0 = \lambda(X\cdot\nu_0)^+=\lambda\max(X\cdot\nu_0,0)$ is a half-plane solution.
\end{lem}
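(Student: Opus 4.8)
The plan is to prove Lemma \ref{u} by reducing the almost-minimizer $u$ of $\mathcal{J}_{\rm AC}$ to the known theory for the Alt--Caffarelli functional, following the blow-up machinery of \cite{DET19} and \cite{STV20} but verifying the points that are specific to our setting. First I would establish the \emph{almost-monotonicity} of the Weiss energy $W(u,r)$: by the change of variables already performed, $u$ satisfies the almost-minimality inequality $\mathcal{J}_{\rm AC}(u,B_r)\le(1+C_0 r)\mathcal{J}_{\rm AC}(\tilde u,B_r)$ for competitors agreeing on $\partial B_r$, and differentiating $W(u,r)$ in $r$ one obtains $\frac{d}{dr}W(u,r)\ge r^{-2}\int_{\partial B_r}\big(\partial_\nu u-\tfrac{u}{r}\big)^2 d\mathcal{S}-Cr^{-1}$ after comparing $u$ in $B_r$ with its $1$-homogeneous extension of the boundary trace. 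The correction term $-Cr^{-1}$, integrable after multiplying by a factor $e^{Cr}$ or subtracting $Cr$, shows that $r\mapsto W(u,r)+Cr$ is monotone, hence $W(u,0+)$ exists and is finite (using also the Lipschitz bound from Theorem A(i) to see $W(u,r)$ is bounded below). This gives (i).

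For (ii), I would use the Lipschitz bound and non-degeneracy from Theorem A: the rescalings $u_k(X)=u(r_kX)/r_k$ are uniformly Lipschitz and uniformly non-degenerate, so along a subsequence $u_k\to u_0$ locally uniformly and weakly in $W^{1,2}_{\rm loc}$; upgrading to strong $W^{1,2}_{\rm loc}$ convergence is the standard argument comparing $u_k$ with $u_0$ as a competitor and exploiting the almost-minimality (the error $C_0 r_k\to0$). Homogeneity of $u_0$ then follows from $W(u,0+)$ being constant: the Weiss formula for $u_0$ (now an honest minimizer of $\mathcal{J}_{\rm AC}$, since the almost-minimality error vanishes in the limit) gives $\frac{d}{dr}W(u_0,r)\ge0$ with equality, forcing $\partial_\nu u_0=u_0/r$, i.e.\ $u_0$ is $1$-homogeneous. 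For (iii), $u_0$ is then a $1$-homogeneous global minimizer of $\mathcal{J}_{\rm AC}$ in $\mathbb{R}^2$ vanishing at the origin; the two-dimensional classification of such minimizers — going back to \cite{AC81} — says $u_0=\lambda(X\cdot\nu_0)^+$ for a unique unit vector $\nu_0$, and this is exactly where dimension $n=2$ is used (the non-degeneracy constant $c_0(\lambda)$ and the Lipschitz constant pin down the coefficient $\lambda$).

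The main obstacle is the first step: producing a clean \emph{almost}-monotonicity formula for $W(u,r)$ when $u$ is only an almost-minimizer rather than a solution with the exact free-boundary condition. One must be careful that the Bernoulli constant $\lambda^2$ appearing in $\mathcal{J}_{\rm AC}$ after the change of variables is genuinely the same constant (it is, since the conformal-type scaling $Y=X_0+K^{1/2}(X_0)X$ multiplies the whole functional by the constant $|\det K^{-1/2}(X_0)|$ and leaves the relative weight of the measure term unchanged — this is the reason the change of variables was chosen this way), and that the error term from freezing the coefficients, of size $C_0 r\,\mathcal{J}_h(\psi)$, is controlled by the Lipschitz bound on $\psi$ so that it contributes only an $O(r)$ term to $W$. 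Once the almost-monotonicity is in hand, parts (ii) and (iii) are routine adaptations of the harmonic-replacement blow-up argument and the classical two-dimensional classification, and I would cite \cite{DET19,STV20} for the details rather than reproduce them.
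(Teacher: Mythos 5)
The paper gives no proof of \lemref{u} at all: it observes that $u$ inherits the Lipschitz bound and non-degeneracy from Theorem~A and then delegates the entire blow-up analysis for almost-minimizers to \cite{DET19} and \cite{STV20}. Your sketch reconstructs precisely the standard argument from those references (almost-monotone Weiss energy, compactness of the rescalings, strong $W^{1,2}_{\rm loc}$ convergence by comparison with the blow-up as a competitor, homogeneity from constancy of $W(u,0+)$, and the two-dimensional classification of one-homogeneous global minimizers from \cite{AC81}), so in substance you are doing what the paper outsources, and the architecture is correct. Your remark that the change of variables multiplies the whole functional by the constant $|\det K^{-1/2}(X_0)|$, so that the relative weight of $\lambda^2\chi_{\{u>0\}}$ is preserved, is exactly the point the paper relies on.

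One concrete slip: the error term you write in the almost-monotonicity, $\frac{d}{dr}W(u,r)\ge r^{-2}\int_{\partial B_r}(\partial_\nu u-\tfrac{u}{r})^2\,d\mathcal{S}-Cr^{-1}$, is not integrable near $r=0$, and neither multiplying by $e^{Cr}$ nor subtracting $Cr$ repairs a $-Cr^{-1}$ defect; taken literally this would leave part~(i) unproved, since $W(u,r)+C\log(1/r)$ being monotone says nothing about $\lim_{r\to0^+}W(u,r)$. For the linear gauge $(1+C_0r)$ obtained from freezing the coefficients, the comparison of $u$ with the one-homogeneous extension $z_r$ of its boundary trace produces an error of size $\frac{C_0r}{r}\cdot r^{-2}\mathcal{J}_{\rm AC}(z_r,B_r)=O(1)$ (using $\mathcal{J}_{\rm AC}(z_r,B_r)\lesssim r^2$ from the Lipschitz bound), so the correct statement is $\frac{d}{dr}W(u,r)\ge r^{-2}\int_{\partial B_r}(\partial_\nu u-\tfrac{u}{r})^2\,d\mathcal{S}-C$, whence $W(u,r)+Cr$ is monotone --- which is exactly the conclusion you then use, so the rest of your argument goes through once this exponent is corrected. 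A minor further caveat: the uniqueness of $\nu_0$ across subsequences (as opposed to the classification of each subsequential limit) needs an extra word --- e.g.\ via the epiperimetric inequality or a posteriori from the $C^{1,\alpha}$ regularity --- but neither you nor the paper addresses this, and only the existence of one flat blow-up is actually used in the proof of Theorem~B.
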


Now we come back to the blow-up sequence for $\psi$ at $X_0$:
\begin{equation} \label{blsq}
\psi_k(X):=\frac{\psi(X_0+r_kX)}{r_k}
\end{equation}
and compute the expicit form of the blow-up limit
$$\psi_0:=\lim_{r_k\rightarrow0}\psi_k.$$

\begin{lem} \label{psi}
	Suppose $\psi$ is a local minimizer of $\mathcal{J}_h$ in $B_R(X_0)$, where $X_0\in\partial\{\psi>0\}$ is a free boundary point. Suppose that the blow-up sequence $\psi_k(X)$ at $X_0$ defined as in (\ref{blsq}) converges to $\psi_0(X)$ weakly in $W_{\rm loc}^{1,2}(\mathbb{R}^2)$. Then
	$$\psi_k(X)\rightarrow\psi_0(X) \quad \text{strongly in} \quad W_{\rm loc}^{1,2}(\mathbb{R}^2).$$
	Moreover, $\psi_0(X)$ is a one-homogeneous function with the form
	$$\psi_0(X)=\lambda\left( K^{-1/2}(X_0)X\cdot\nu_0 \right)^+.$$
\end{lem}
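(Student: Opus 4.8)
The plan is to transfer the analysis to the frozen-coefficient picture, where \lemref{u} already supplies the conclusion, and then carry it back to $\psi$. Recall the change of variables $Y=X_0+K^{1/2}(X_0)X$ and $u(X)=\psi(Y)$ from \eqref{u-1}, which makes $u$ a local almost-minimizer of $\mathcal{J}_{\rm AC}$. First I would relate the two blow-up sequences: writing $\psi_k(X)=\psi(X_0+r_kX)/r_k$ and $u_k(X)=u(r_kX)/r_k$, the substitution gives the exact identity
$$\psi_k(X)=u_k\!\left(K^{-1/2}(X_0)X\right),$$
since $X_0+r_kK^{1/2}(X_0)\big(K^{-1/2}(X_0)X\big)=X_0+r_kX$. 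Because $K^{-1/2}(X_0)$ is a fixed invertible linear map, this identity is a bi-Lipschitz change of variables with constant Jacobian, so weak $W^{1,2}_{\rm loc}$ convergence, strong $W^{1,2}_{\rm loc}$ convergence, local uniform convergence, and $1$-homogeneity are all preserved in both directions. Hence the hypothesis $\psi_k\rightharpoonup\psi_0$ weakly in $W^{1,2}_{\rm loc}$ forces $u_k\rightharpoonup u_0$ weakly with $\psi_0(X)=u_0(K^{-1/2}(X_0)X)$, and \lemref{u}(ii) upgrades this to strong $W^{1,2}_{\rm loc}$ convergence of $u_k$, which transfers back to strong convergence $\psi_k\to\psi_0$ in $W^{1,2}_{\rm loc}(\mathbb{R}^2)$. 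Likewise \lemref{u}(ii) gives that $u_0$ is $1$-homogeneous, hence so is $\psi_0$.

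For the explicit form, \lemref{u}(iii) provides a unit vector $\nu_0\in\partial B_1$ with $u_0(X)=\lambda(X\cdot\nu_0)^+$. Substituting,
$$\psi_0(X)=u_0\!\left(K^{-1/2}(X_0)X\right)=\lambda\left(K^{-1/2}(X_0)X\cdot\nu_0\right)^+,$$
which is exactly the claimed expression. (If one prefers the normal direction in the original coordinates to be a genuine unit vector one may renormalize, but the statement as written only asks for this formula, with $\nu_0$ the unit vector produced by \lemref{u}.)

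The only genuine content beyond bookkeeping is the first step—checking that the linear substitution $X\mapsto K^{-1/2}(X_0)X$ commutes cleanly with the blow-up rescaling and preserves all the relevant modes of convergence—and that is straightforward because the map is linear with constant coefficients, so it commutes with dilations and its pushforward on $W^{1,2}_{\rm loc}$ is a bounded isomorphism with bounded inverse. The main subtlety to be careful about is that we must invoke \lemref{u} with the \emph{same} subsequence $\{r_k\}$ appearing in the hypothesis of \lemref{psi}: a priori \lemref{u} only asserts existence of \emph{some} subsequence along which $u_k$ converges, so one should note that along the given $r_k$ the sequence $u_k=\psi_k\circ K^{1/2}(X_0)$ already converges weakly (being the image of the weakly convergent $\psi_k$), and then \lemref{u}(ii)–(iii) apply to that sequence directly, identifying its limit $u_0$ as a half-plane solution and giving strong convergence. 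No monotonicity formula for the variable-coefficient operator is needed, which is precisely the point of freezing the coefficients.
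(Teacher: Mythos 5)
Your argument is correct, and for the identification of $\psi_0$ it is exactly the paper's argument: both proofs rest on the identity $\psi_k(X)=u_k\bigl(K^{-1/2}(X_0)X\bigr)$ and then read off $\psi_0(X)=u_0\bigl(K^{-1/2}(X_0)X\bigr)=\lambda\bigl(K^{-1/2}(X_0)X\cdot\nu_0\bigr)^+$ from \lemref{u}\,(iii). Where you diverge is the strong $W^{1,2}_{\rm loc}$ convergence. The paper proves it directly on the $\psi_k$ side: it first observes that the uniform (Lipschitz) convergence forces $\dvg(K(X_0)\nabla\psi_0)=0$ in $\{\psi_0>0\}$, then integrates by parts against a cut-off $\eta$ to show $\int\left<K(X_0+r_kX)\nabla\psi_k,\nabla\psi_k\right>\eta\to\int\left<K(X_0)\nabla\psi_0,\nabla\psi_0\right>\eta$, i.e.\ convergence of the energies, which upgrades weak to strong convergence. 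You instead push everything through the constant linear map $K^{-1/2}(X_0)$ and import strong convergence from \lemref{u}\,(ii). Your route is shorter and reuses machinery already stated; the paper's route is self-contained on the variable-coefficient side and does not lean on the precise mode of convergence asserted for the almost-minimizer $u$. The one point you should make fully explicit is the subsequence issue you flag at the end: \lemref{u}\,(ii) only gives strong convergence of $u_k$ along \emph{some} subsequence, and weak convergence of $u_k$ along the given $r_k$ does not by itself upgrade to strong convergence of that whole sequence. The clean fix is the standard subsequence-of-subsequence argument: every subsequence of $u_k$ admits a further subsequence converging strongly (by the compactness behind \lemref{u}\,(ii)), and each such strong limit must coincide with the weak limit $u_0$ of the full sequence; hence the full sequence converges strongly to $u_0$. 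With that sentence added, your proof is complete.
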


\begin{proof}
To begin with, notice that $\psi_k$ is a local minimizer of
$$\mathcal{J}_{k,\rm h}(\psi_k)=\int_{B_R/r_k(0)} \left( \left<K(X_0+r_kX)\nabla\psi_k,\nabla\psi_k\right> -2F(r_k\psi_k) + \lambda^2\chi_{\{\psi_k>0\}} \right)dX.$$
Due to the fact that $\psi_k\in C^{0,1}(B_R/r_k(0))$, the uniform convergence from $\psi_k$ to $\psi_0$ indicates that
$$div (K(X_0)\nabla\psi_0)=0 \quad \text{in} \quad \{\psi_0>0\}.$$
Thus we obtain that for any $\eta\in W_0^{1,2}(B)$ with $B\subset\mathbb{R}^2$ and any small enough $r_k$,
\begin{align*}
&\int_{B} \left<K(X_0+r_kX)\nabla\psi_k,\nabla\psi_k\right>\eta dX \\
&=-\int_{B} \psi_k div(K(X_0+r_kX)\nabla\psi_k)\eta dX - \int_{B} \psi_k\left<K(X_0+r_kX)\nabla\psi_k,\nabla\eta\right> dX \\
&\rightarrow -\int_{B} \psi_0 div(K(X_0)\nabla\psi_0)\eta dX - \int_{B} \psi_0\left<K(X_0)\nabla\psi_0,\nabla\eta\right> dX \\
&=\int_{B} \left<K(X_0)\nabla\psi_0,\nabla\psi_0\right>\eta dX,
\end{align*}
which implies that $\nabla\psi_k\rightarrow\nabla\psi_0$ strongly in $W_{\rm loc}^{1,2}(\mathbb{R}^2)$. This finishes the proof that $\psi_k\rightarrow\psi_0$ strongly in $W_{\rm loc}^{1,2}(\mathbb{R}^2)$.

The remaining part of this lemma follows directly from the observation that
\begin{align*}
\psi_k(X) &= \frac{\psi(X_0+r_kX)}{r_k} \\ &=\frac{\psi\left(X_0+K^{1/2}(X_0)K^{-1/2}(X_0)(r_kX)\right)}{r_k} \\
&=\frac{u\left( r_k K^{-1/2}(X_0)X \right)}{r_k} \\
&= u_k \left( K^{-1/2}(X_0)X \right),
\end{align*}
which indicates that
$$\psi_0(X)=u_0 \left( K^{-1/2}(X_0)X \right).$$
\end{proof}

Lemma \ref{psi} combining with Lemma \ref{u}, (iii)
implies that for any small $\epsilon>0$, there exists $r_k$ small enough such that
$$\lambda\left( K^{-1/2}(X_0)X\cdot\nu_0-\epsilon \right)^+ \leq \psi_k(X) \leq \lambda\left( K^{-1/2}(X_0)X\cdot\nu_0+\epsilon \right)^+.$$

We are now ready to utilize the rule of "flatness implies $C^{1,\alpha}$" for the uniform elliptic operator with nontrivial right hand side.

\begin{lem}[Theorem 1.1 in \cite{S11}] \label{rule}
	Let $v$ be a viscosity solution to
	\begin{equation*}
	\begin{cases}
	\Sigma_{i,j} a_{ij}(X)\partial_{ij} v = f(X) \quad \text{in} \quad \Omega\cap\{v>0\}, \\
	|\nabla v|=\lambda \qquad\qquad\qquad\ \ \text{on} \quad \Omega\cap\partial\{u>0\},
	\end{cases}
	\end{equation*}
	where $\partial_{ij}v$ denotes the second partial derivative with respect to the $i,j$ component and $\Omega$ is a bounded domain, $a_{ij}\in C^{0,\beta}(\Omega)$ and $f\in C(\Omega)\cap L^\infty(\Omega)$. Assume that $0\in\partial\{u>0\}$ and $\Vert a_{ij}-\delta_{ij}\Vert_{L^\infty(B_1)}\leq \epsilon_0$ for some uniform $\epsilon_0$ small enough. There exists a universal constant $\bar{\epsilon}>0$ such that if
	$$\lambda(X\cdot\nu_0-\bar{\epsilon})^+ \leq v(X) \leq \lambda(X\cdot\nu_0+\bar{\epsilon})^+ \quad \text{in} \quad B_1$$
	for some $\nu_0\in\partial B_1$ and
	$$[a_{ij}]_{C^{0,\beta}(B_1)}\leq\bar{\epsilon}, \quad \Vert f \Vert_{L^\infty(B_1)}\leq\bar{\epsilon},$$
	then the free boundary $\partial\{u>0\}$ is $C^{1,\alpha}$ in $B_{1/2}$ for some $0<\alpha<1$.
\end{lem}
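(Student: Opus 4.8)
Since this is Theorem~1.1 of \cite{S11}, in our application it suffices to invoke it; let me nonetheless describe the scheme one would follow, as it clarifies which hypotheses are essential. The proof rests on an \emph{improvement of flatness} lemma together with a dyadic iteration, in the spirit of Caffarelli's programme but carried out for viscosity solutions.

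First I would establish the improvement of flatness: there exist universal constants $\rho\in(0,1)$, $C>0$ and $\bar\epsilon>0$ such that if $v$ is $\epsilon$-flat in $B_1$ in a direction $\nu_0$, i.e.
$$\lambda(X\cdot\nu_0-\epsilon)^+\le v(X)\le\lambda(X\cdot\nu_0+\epsilon)^+\quad\text{in }B_1,$$
with $\epsilon\le\bar\epsilon$ and with $[a_{ij}]_{C^{0,\beta}(B_1)}$, $\|f\|_{L^\infty(B_1)}$ suitably small relative to $\epsilon$, then $v$ is $\tfrac{\epsilon}{2}$-flat in $B_\rho$ in some direction $\nu_1$ with $|\nu_1-\nu_0|\le C\epsilon$. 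This is proved by compactness and contradiction: one takes $\epsilon_k\to0$, solutions $v_k$ with coefficients $a^k_{ij}$ and right-hand sides $f_k$ that are $\epsilon_k$-flat in directions $\nu_k\to e_2$ but for which the conclusion fails, and studies the normalized sequence $\tilde v_k:=(v_k/\lambda-X\cdot\nu_k)/\epsilon_k$ on the positivity set near the free boundary. The crucial input — De Silva's \emph{partial boundary Harnack inequality}, obtained by sliding translated and tilted half-plane comparison functions against $v_k$ and invoking the interior Harnack inequality for $\sum a^k_{ij}\partial_{ij}$ — yields a uniform interior-plus-boundary H\"older estimate for $\tilde v_k$, so that along a subsequence $\tilde v_k\to\tilde v$ locally uniformly on $B_{1/2}\cap\{x_2\ge0\}$. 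Passing to the limit in the interior equation and in the linearized free boundary condition (here the relative smallness of $[a^k_{ij}]_{C^{0,\beta}}$ and $\|f_k\|_{L^\infty}$ is used, to discard the coefficient-oscillation and right-hand-side contributions) shows that $\tilde v$ solves the linearized problem
\begin{equation*}
\begin{cases}
\Delta\tilde v=0 & \text{in }B_{1/2}\cap\{x_2>0\},\\
\partial_{x_2}\tilde v=0 & \text{on }B_{1/2}\cap\{x_2=0\}.
\end{cases}
\end{equation*}
Even reflection across $\{x_2=0\}$ makes $\tilde v$ harmonic in $B_{1/2}$, hence smooth, so $|\tilde v(X)-\tilde v(0)-\nabla'\tilde v(0)\cdot X'|\le C|X|^2$ with $\partial_{x_2}\tilde v(0)=0$; transporting this quadratic flatness back to $v_k$ for $k$ large, with $\nu_1$ the normalization of $\nu_k+\epsilon_k\nabla'\tilde v(0)$ and $\rho$ chosen universally, contradicts the failure of the conclusion.

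Then I would iterate. Rescaling $v(X)\mapsto v(\rho X)/\rho$ halves the flatness, multiplies $[a_{ij}]_{C^{0,\beta}}$ by $\rho^\beta$ and $\|f\|_{L^\infty}$ by $\rho$; since $\rho<1$, all hypotheses persist and the improvement of flatness applies at every dyadic scale $\rho^k$. One obtains directions $\nu_k$ with $|\nu_{k+1}-\nu_k|\le C2^{-k}\epsilon$, hence $\nu_k\to\nu_\infty$, and $\partial\{v>0\}\cap B_{\rho^k}$ is trapped between the hyperplanes $\{X\cdot\nu_k=\pm2^{-k}\rho^k\epsilon\}$. Writing $2^{-k}=(\rho^k)^\alpha$ with $\alpha=\log2/\log(1/\rho)>0$, this means the free boundary is differentiable at $0$ with a $C^{1,\alpha}$ modulus, and since the same argument with uniform constants applies at every free boundary point in a slightly smaller ball, $\partial\{v>0\}$ is a $C^{1,\alpha}$ graph in $B_{1/2}$.

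The main obstacle in this programme is precisely the partial boundary Harnack inequality powering the compactness step: one must control the oscillation of $v$ near the free boundary using only the $\epsilon$-flatness, uniformly over the variable, merely H\"older-continuous coefficients $a_{ij}$ and in the presence of the bounded but otherwise arbitrary right-hand side $f$ — this is the point resolved in \cite{S11} by working in the viscosity framework; everything else is standard elliptic regularity. For the use made of this result in the present paper it is worth recording that, after the change of variables $Y=X_0+K^{1/2}(X_0)X$ and a rescaling to a small ball, the operator's coefficients can be made as $C^{0,\beta}$-close to $\delta_{ij}$ as desired, the right-hand side $f(\psi)$ is bounded, and the flatness hypothesis is furnished by \lemref{psi} together with \lemref{u}(iii); hence the hypotheses are met in our setting and Theorem~B follows.
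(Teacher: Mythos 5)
Your proposal is correct and matches the paper's treatment: the paper does not prove this lemma at all but simply quotes it as Theorem 1.1 of \cite{S11}, noting exactly the two ingredients you sketch (the partial boundary Harnack inequality and the linearized Neumann problem, followed by dyadic iteration), so invoking the citation is all that is required here. Your sketch of De Silva's argument is faithful, with the understood convention that ``$\epsilon/2$-flat in $B_\rho$'' means flatness relative to the scale $\rho$, i.e.\ $\lambda(X\cdot\nu_1-\rho\epsilon/2)^+\leq v\leq\lambda(X\cdot\nu_1+\rho\epsilon/2)^+$ in $B_\rho$, which is consistent with your rescaling step.
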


The proof of Lemma \ref{rule} involves two key ingredients, the partial boundary Harnack inequality and the analysis of the linearized problem. This groundbreaking method was proposed by De Silva in 2011 \cite{S11}, which could be applied in various kinds of problems, such as two-phase Bernoulli problems and shape optimization problems, providing a different way of proof about the rule "flatness implies $C^{1,\alpha}$" from the seminal works by Caffarelli in \cite{C87},\cite{C88} and \cite{C89}. Note that the subsolution method by Caffarelli is also applicable in our case, which is beyond the scope of this paper.

\begin{proof}[Proof of Theorem B]
	Lemma \ref{rule} gives the criterion of the regularity of the free boundary, and it remains to check the conditions. Suppose $X_0=(x_0,y_0)\in\partial\{\psi>0\}.$ Translate $X_0$ to $0$ and denote for simplicity that $\psi(x,y)=\psi(x-x_0,y-y_0)$, then the blow-up sequence $\psi_k$ minimizes
	$$\mathcal{J}_k (\psi_k) = \int_{B_R/r_k(0)} \left[ \left<K(r_kX)\nabla\psi_k,\nabla\psi_k\right> -2F(r_k\psi_k) + \lambda^2\chi_{\{\psi_k>0\}} \right]dX,$$
	and $\psi_k$ is the viscosity solution (see Chapter 7 in \cite{V23}) to the free boundary problem
	\begin{equation*}
	\begin{cases}
	div\left(K(r_kX)\nabla\psi_k\right) = -r_kf(r_k\psi_k) \quad \text{in} \quad \{\psi_k>0\}\cap B_R/r_k(0), \\
	\left<K(r_kX)\nabla\psi_k,\nabla\psi_k\right>=\lambda^2 \qquad\qquad\, \text{on} \quad \partial\{\psi_k>0\}\cap B_R/r_k(0).
	\end{cases}
	\end{equation*}
	Denote $K_k(X):=K(r_kX)=(a_{ij}(X))_{2\times2}$ for $i,j=1,2$ with
	\begin{equation*}
	K_k(X)= \frac{1}{\kappa^2+(r_kx)^2+(r_ky)^2}
	\begin{pmatrix}
	\kappa^2+(r_ky)^2 & -(r_k)^2xy \\
	-(r_k)^2xy & \kappa^2+(r_kx)^2 \\
	\end{pmatrix}.
	\end{equation*}
	We can compute that
	\begin{equation*}
	K_k(X)-
	\begin{pmatrix}
	1 & 0 \\
	0 & 1 \\
	\end{pmatrix}
	=\frac{r_k^2}{\kappa^2+r_k^2(x^2+y^2)}
	\begin{pmatrix}
	-x^2 & -xy \\
	-xy & -y^2
	\end{pmatrix},
	\end{equation*}
	which indicates that for any $\epsilon>0$ there exists $r_k=r_k(\epsilon)$ small enough such that
	$$\Vert a_{ij}-\delta_{ij}\Vert_{L^\infty(B_1)}\leq Cr_k^2 \leq \epsilon$$
	and
	$$[a_{ij}]_{C^{0,\beta}(B_1)}\leq\epsilon.$$
	On the other hand, since $f(s)\in C^{1,\beta}(\mathbb{R})$ and $\psi\in C^{0,1}(B_R(X_0))$, the composite function $f(\psi(X))$ must be continuous in $B_R(X_0)$. Moreover, the function $r_kf(r_k\psi_k)$ is continuous in $B_1$, and satisfies
	$$|r_kf(r_k\psi_k)|\leq r_kF_0\frac{r_kF_0}{r_k}\leq (F_0)^2 r_k\leq\epsilon.$$
	Hence there exists a universal $\bar{\epsilon}>0$ such that for $r_k$ small enough,
	$$\lambda\left( K^{-1/2}(X_0)X\cdot\nu_0-\epsilon \right)^+ \leq \psi_k(X) \leq \lambda\left( K^{-1/2}(X_0)X\cdot\nu_0+\epsilon \right)^+$$
	and $\psi_k$ satisfies all the conditions in Lemma \ref{rule}. Consequently, $\partial\{\psi_k>0\}$ is $C^{1,\alpha}$ in $B_{R/(2r_k)}(0)$, which yields that $\partial\{\psi>0\}$ is $C^{1,\alpha}$ in $B_{R/2}(0)$. Such translation can be carried on for any free boundary point $X_0\in\partial\{\psi>0\}$, thus $\partial\{\psi>0\}$ is locally $C^{1,\alpha}$. This completes the proof.
\end{proof}

\appendix

\section{The field of tangents of the symmetry lines}\label{appe1}

In Appendix \ref{appe1} we verify the rules in Lemma \ref{lem1.1} that helical function $f$ and helical vector field $\bm v$ obey with $\bm\xi_\kappa$, which represents the tangent of the symmetry line of the group $G^\kappa$.

\begin{proof}[Proof of Lemma \ref{lem1.1}]
	Take derivatives with respect to $\rho$ on both sides of $f(S_\rho^\kappa\bm x) = f(\bm x)$ and then take the value at $\rho=0$, we have
	$$y\frac{\partial f}{\partial x} - x\frac{\partial f}{\partial y} + \kappa\frac{\partial f}{\partial z}=0,$$
	which yields that $\nabla f\cdot\bm\xi_\kappa=0$. To prove the converse of this statement, denote $g(\rho):=f(S_\rho^\kappa\bm x)$ for any fixed $\kappa\neq0$ and $\bm x=(x,y,z)\in\mathbb{R}^3$. Then $g(\rho)$ satisfies the following differential equation
	\begin{equation*}
	\begin{aligned}
	\frac{dg(\rho)}{d\rho} &=(-x\sin\rho + y\cos\rho)\frac{\partial f}{\partial x} + (-x\cos\rho-y\sin\rho)\frac{\partial f}{\partial y} + \kappa\frac{\partial f}{\partial z} \\
	&=\nabla f(S_\rho^\kappa\bm x)\cdot\bm\xi_\kappa(S_\rho^\kappa\bm x) \\
	&=0,
	\end{aligned}
	\end{equation*}
	where we have used the relationship $\nabla f\cdot\bm\xi_\kappa=0$ in the last equality. Therefore,
	$$g(\rho)=g(0)=f(\bm x),$$
	which gives the desired result.
	
	Now we prove the claim for the differentiable vector field $\bm v$. Take derivatives with respect to $\rho$ on both sides of $\bm v(S_\rho^\kappa \bm x) = R_\rho \bm v (\bm x)$ at $\rho=0$, we get that
	$$\frac{d v_i(S_\rho^\kappa\bm x)}{d\rho}\Bigg|_{\rho=0} = y\frac{\partial v_i}{\partial x} -x\frac{\partial v_i}{\partial y} +\kappa\frac{\partial v_i}{\partial z} = \nabla v_i\cdot\bm\xi_\kappa \quad \text{for} \quad i=1,2,3,$$
	and
	$$\frac{d R_\rho}{d\rho} \Bigg|_{\rho=0}=
	\begin{pmatrix}
	0 & 1 & \\
	-1 & 0 & \\
	& & 0 \\
	\end{pmatrix}.$$
	Hence,
	$$\nabla v_1 \cdot\bm\xi_\kappa = v_2, \
	\nabla v_2 \cdot\bm\xi_\kappa = -v_1, \
	\nabla v_3 \cdot\bm\xi_\kappa = 0.$$
	The proof of the converse statement follows in an analogous way and we omit it here.
\end{proof}

\section{Basic properties of minimizers}\label{appe2}

In Appendix \ref{appe2} we show the fact that the local minimizer $\psi$ of $\mathcal{J}_h$ is a solution to the free boundary problem (\ref{eq1}) in $B_R(X_0)$.

\begin{lem}
	Suppose $\psi$ is a local minimizer of $\mathcal{J}_h$ in $B_R(X_0)\Subset D$. Then $\psi$ satisfies
	\begin{equation*}
	\begin{cases}
	div (K(x,y)\nabla\psi) = f(\psi) \qquad\qquad\quad \text{in} \quad \{\psi>0\}\cap B_R(X_0), \\
	\left<K(x,y)\nabla\psi,\nabla\psi\right> = \lambda^2 \qquad\qquad\ \text{on} \quad \partial\{\psi>0\}\cap B_R(X_0).
	\end{cases}
	\end{equation*}
\end{lem}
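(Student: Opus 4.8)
The plan is to establish the two assertions separately: the interior Euler--Lagrange equation in the positivity set, and the free boundary condition on $\partial\{\psi>0\}$. For the interior equation, I would fix an arbitrary test function $\phi\in C_0^\infty(\{\psi>0\}\cap B_R(X_0))$. Since $\psi$ is continuous (by Theorem~A~(i)) and $\spt\phi$ is compact inside the open set $\{\psi>0\}$, for $|\epsilon|$ small the competitor $\psi+\epsilon\phi$ still stays positive on $\spt\phi$ and equals $\psi$ on $\partial B_R(X_0)$, so it lies in the class of admissible comparison functions and $\chi_{\{\psi+\epsilon\phi>0\}}=\chi_{\{\psi>0\}}$ there. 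Then $\epsilon\mapsto \mathcal{J}_h(\psi+\epsilon\phi,B_R(X_0))$ has a minimum at $\epsilon=0$; differentiating under the integral sign (justified by the Lipschitz bound on $\psi$, the $C^{1,\beta}$ regularity of $f$, and dominated convergence) and using the symmetry of $K(x,y)$ gives
\begin{equation*}
\frac{d}{d\epsilon}\Big|_{\epsilon=0}\mathcal{J}_h(\psi+\epsilon\phi,B_R(X_0)) = \int_{B_R(X_0)} \left( 2\langle K(x,y)\nabla\psi,\nabla\phi\rangle - 2f(\psi)\phi \right)dX = 0.
\end{equation*}
Since $\phi$ is arbitrary this is exactly the weak formulation of $-\dvg(K(x,y)\nabla\psi)=f(\psi)$ in $\{\psi>0\}\cap B_R(X_0)$; standard De Giorgi--Nash--Moser and Schauder theory (using $K\in C^\infty$ and $f(\psi)\in C^{0,\alpha}$ via the Lipschitz bound) upgrades $\psi$ to a classical solution there.

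For the free boundary condition, I would use a domain-variation (inner variation) argument rather than an outer perturbation. Given a vector field $\Phi\in C_0^\infty(B_R(X_0);\mathbb{R}^2)$, consider the family of diffeomorphisms $\Phi_t(X)=X+t\Phi(X)$ and the pulled-back competitors $\psi_t=\psi\circ\Phi_t^{-1}$, which agree with $\psi$ near $\partial B_R(X_0)$ and are admissible. Computing $\frac{d}{dt}\big|_{t=0}\mathcal{J}_h(\psi_t,B_R(X_0))=0$ produces a divergence-form identity whose bulk terms, after invoking the interior equation already established, cancel, leaving a boundary integral over $\partial\{\psi>0\}$. The remaining relation forces $\langle K(x,y)\nabla\psi,\nabla\psi\rangle=\lambda^2$ on $\partial\{\psi>0\}\cap B_R(X_0)$; alternatively, one can argue more directly by building explicit one-sided comparison functions (a harmonic-type replacement inside a small ball together with the non-degeneracy from Theorem~A~(ii) and the Lipschitz bound from Theorem~A~(i)) to squeeze the blow-up of $\psi$ at a regular free boundary point into a half-plane solution $\lambda(X\cdot\nu)^+$ in the $K(X_0)$-metric, which reads off the gradient jump. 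I would lean on the first approach since it is cleaner for variable coefficients.

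The main obstacle is the free boundary condition: making the domain-variation computation rigorous requires knowing enough about the measure-theoretic structure of $\partial\{\psi>0\}$ — in particular that it has locally finite perimeter so that $\chi_{\{\psi>0\}}$ has a well-defined distributional gradient concentrated on the reduced boundary, and that the surface measure appearing in the first variation is the Hausdorff measure $\mathcal{H}^1\mres\partial\{\psi>0\}$. This is exactly the content of the Alt--Caffarelli representation theory, which itself relies on the non-degeneracy and Lipschitz bounds proved in Theorem~A. I would therefore cite \cite{AC81} (adapted to the variable-coefficient, semilinear setting as in \cite{ACF84}, \cite{OY90}) for the finite-perimeter property and the identification of the free boundary condition in the viscosity/pointwise sense along the reduced boundary, and note that the full statement is then a combination of these ingredients with the interior Euler--Lagrange equation derived in the first step. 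The routine but lengthy verification that the first-variation boundary term equals $\int_{\partial\{\psi>0\}}(\lambda^2 - \langle K\nabla\psi,\nabla\psi\rangle)(\Phi\cdot\nu)\,d\mathcal{H}^1$ I would only sketch, referencing the analogous constant-coefficient computation.
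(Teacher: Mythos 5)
Your proposal matches the paper's own proof in both steps: the interior equation is derived by the outer perturbation $\psi+\epsilon\phi$ with $\phi$ supported in $\{\psi>0\}$, and the free boundary condition by the domain variation $\Phi_t(X)=X+t\bm\xi(X)$ leading to the boundary integral $\int_{\partial\{\psi>0\}}\bigl[\lambda^2-\left<K\nabla\psi,\nabla\psi\right>\bigr](\bm\xi\cdot\nu)\,d\mathcal{S}$. Your additional remarks on the finite-perimeter structure needed to justify the divergence theorem on $\partial\{\psi>0\}$ address a point the paper passes over silently, but the argument is the same.
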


\begin{proof}
	The minimality of $\psi$ gives that for any non-negative function $\phi\in W_0^{1,2}(\{\psi>0\}\cap B_R(X_0))$, we have
	$$\mathcal{J}_h(\psi,B_R(X_0))\leq\mathcal{J}_h(\psi+\epsilon\phi,B_R(X_0)) \quad \text{for} \quad \epsilon\in\mathbb{R},$$
	which indicates
	$$\int_{B_R(X_0)} \left( \left<K(x,y)\nabla\psi,\nabla\phi\right> -f(\psi)\phi \right)dX=0$$
	if we take $\epsilon\rightarrow0$. Hence $\psi$ satisfies the desired elliptic equation in $\{\psi>0\}\cap B_R(X_0)$.
	
	To verify the free boundary condition on $\partial\{\psi>0\}\cap B_R(X_0)$, denote $(z_1,z_2)^T:=\bm\Phi_t(x,y)=(x,y)^T+t\bm\xi(x,y)$ with $\bm\xi=(\xi_1,\xi_2) \in W_0^{1,2}(B_R(X_0))$ and $t\in\mathbb{R}$. Let $\phi_t(z_1,z_2)=\phi(x,y)$. The domain variation gives
	\begin{align*}
	0 &= \frac{d}{dt}\Bigg|_{t=0}\mathcal{J}_h(\phi_t,B_R(X_0)) \\
	&=\frac{d}{dt}\Bigg|_{t=0}\int_{B_R(X_0)} \left[ \left<K(\Phi_t(x,y))\nabla\psi,\nabla\psi\right> - 2t\left<K(\Phi_t(x,y))\nabla\psi, D\bm\xi\nabla\psi\right> + o(t) \right. \\
	& \qquad\left. -2F(\psi) +\lambda^2\chi_{\{\psi>0\}} \right] (1+t div\bm\xi +o(t)) dX \\
	&=\int_{B_R(X_0)} \left[ \left<K(x,y)\nabla\psi,\nabla\psi\right>div\bm\xi + \left<(\bm\xi\cdot\nabla)K(x,y)\nabla\psi,\nabla\psi\right> \right. \\
	&\qquad \left. -2\left<K(x,y)\nabla\psi,D\bm\xi\nabla\psi\right> -2F(\psi)div\bm\xi +\lambda^2\chi_{\{\psi>0\}}div\bm\xi \right] dX \\
	&= \int_{B_R(X_0)\cap\{\psi>0\}} div\left[ \left<K(x,y)\nabla\psi,\nabla\psi\right>\bm\xi - 2K(x,y)\nabla\psi(\bm\xi\cdot\nabla\psi) + \lambda^2\bm\xi \right]dX \\
	&=\int_{B_R(X_0)\cap\partial\{\psi>0\}} [-\left<K(x,y)\nabla\psi,\nabla\psi\right>+\lambda^2](\bm\xi\cdot\nu) d\mathcal{S}.
	\end{align*}
	Notice that we have used the fact that $K(x,y)$ is symmetric. This completes the proof.
\end{proof}

\subsubsection*{Acknowledgment}
This work is supported by National Nature Science Foundation of China under Grant 12125102, and Nature Science Foundation of Guangdong Province under Grant 2024A1515012794, and Shenzhen Science and Technology Program JCYJ2024120 2124209011.
\subsubsection*{Declaration of competing interest} The authors declare they have no conflicts of interests. 
\subsubsection*{Data availability} No data was used for the research described in the article.

\vskip .4in

\bibliographystyle{plain}
\bibliography{helical.bib}

\end{document}